\documentclass[10pt]{article}

% PACKAGES

\usepackage[english]{babel}
\usepackage[utf8]{inputenc}
\usepackage[T1]{fontenc}

\usepackage{amsmath, amsthm, amssymb, mathtools}

\usepackage[shortlabels]{enumitem}

\usepackage{cases}
\usepackage{mathtools,tikz-cd}

%\usepackage{cancel}

% LAYOUT
\usepackage[a4paper]{geometry}

\usepackage[displaymath, mathlines]{lineno}
%\linenumbers

\AtBeginDocument{\def\MR#1{}}

\usepackage{hyperref}

\providecommand{\keywords}[1]{\textbf{Keywords.} #1}
\providecommand{\MSC}[1]{\textbf{AMS Subject Classifications.} #1}

\newtheorem{theorem}{Theorem}[section]
\newtheorem{corollary}[theorem]{Corollary}
\newtheorem{lemma}[theorem]{Lemma}
\newtheorem{proposition}[theorem]{Proposition}
\newtheorem{example}[theorem]{Example}

\theoremstyle{definition}
\newtheorem{definition}[theorem]{Definition}
\newtheorem{remark}[theorem]{Remark}

% RENEW COMMANDS

\renewcommand\epsilon{\varepsilon}

% NEW COMMANDS needing PACKAGES

\usepackage{color}

\usepackage{xspace}

\newcommand{\C}{\field{C}\xspace}
\newcommand{\R}{\field{R}\xspace}

\newcommand{\N}{\field{N}\xspace}

\newcommand{\Herm}{\field{H}\xspace}
\newcommand{\Sym}{\field{S}\xspace}

\newcommand\DDC{D^2_{\C}}
\newcommand\DDR{D^2_{\R}}

% NEW COMMANDS

%\def\ds{\displaystyle}

\newcommand{\field}[1]{\ensuremath{\mathbb{#1}}}

\newcommand{\ens}[1]{ \left\{#1\right\} }

\newcommand\card{\mathrm{card} \,}

\newcommand\loc{{\mathrm{loc}}}

% NEW COMMANDS

\def\ds{\displaystyle}

\newcommand{\lin}[1]{\mathcal{L}(#1)}

\newcommand\abs[1]{\left|#1\right|}
\def\norm#1{\left\|#1\right\|}

\newcommand\Id{\mathrm{Id}}

\newcommand\clos[1]{\overline{#1}}

\newcommand\conj[1]{\overline{#1}}

\newcommand\Trace{\mathrm{Trace} \,}
\newcommand\MA{\mathrm{MA}}

\newcommand\real{\mathfrak{Re} \,}

\newcommand{\ps}[2]{ {\left\langle #1 , #2 \right\rangle} }

\newcommand\dist{\mathrm{dist} \,}

\newcommand\weakto{\rightharpoonup}

\newcommand\matcone{\mathcal{C}}
\newcommand\veccone{\Gamma}

\newcommand\matconeR{\mathcal{E}}

\newcommand\degr{d}

\newcommand\st{\quad \middle| \quad}

\newcommand\tra{\mathrm{Tr}}

\title{Local regularity for concave homogeneous complex degenerate elliptic equations comparable to the Monge-Ampère equation}

\AtBeginDocument{\def\MR#1{}}

\author{
Soufian Abja\thanks{Faculty of Mathematics and Computer Science, Jagiellonian University, ul. {\L}ojasiewicza 6, 30-348 Krak\'{o}w, Poland (\texttt{Soufian.Abja@im.uj.edu.pl}, \texttt{math.golive@gmail.com}).}
\and
Guillaume Olive\footnotemark[1]
}

\date{\today}

\begin{document}

\maketitle

\begin{abstract}
In this paper, we establish a local regularity result for $W^{2,p}_{\loc}$ solutions to complex degenerate nonlinear elliptic equations $F(\DDC u)=f$ when they are comparable to the Monge-Ampère equation.
Notably, we apply our result to the so-called $k$-Monge-Ampère equation.
\end{abstract}

\keywords{degenerate elliptic equations, local regularity, Hessian equations}

\vspace{0.2cm}
\MSC{35J70, 32W20}

%\tableofcontents

\section{Introduction and main results}

In this work, we are interested in the local regularity theory for nonlinear complex degenerate elliptic equations of the form
\begin{equation}\label{FDu equ}
F(\DDC u)=f,
\end{equation}
where $\DDC u$ denotes the complex Hessian of $u$.

Such equations have been studied extensively in the literature, going back to the celebrated work \cite{CNS85} on the Dirichlet problem in the real case and its counterpart \cite{Li04} for the complex setting.
We recall that an important feature of degenerate equations is that, unlike uniformly elliptic equations, the $C^{2,\alpha}$-regularity may fail.
Perhaps one of the most important equation of the form \eqref{FDu equ} is the complex Monge-Amp\`ere equation:
\begin{equation}\label{MA equ}
\det (\DDC u)=f>0.
\end{equation}

The regularity of the solution of this equation is studied in the literature by many authors and by different tools.
It was first proved in pioneering work \cite{BT76} that the solution $u$ to the Dirichlet problem in a ball $B$ belongs to $C^{1,1}(B) \cap C^0(\clos{B})$ provided that the right-hand side $f \in C^2(\clos{B})$ and the boundary data is $C^2(\partial B)$.
Another important regularity result concerning the Dirichlet problem associated with \eqref{MA equ} in a strictly pseudoconvex domain, established in \cite{CKNS85}, is the smooth regularity up to the boundary of its solution when the right-hand side, the boundary data and the domain are all smooth (the strict positivity of the right-hand side is also essential).

The local regularity of \eqref{MA equ} (no boundary data) was also studied.
A sharp result was obtained in \cite{BD11} by developing some methods in \cite{Tru80}: solutions $u \in W^{2,p}_{\loc}$ of \eqref{MA equ} with $f \in C^{\infty}$ are necessarily $C^{\infty}$ whenever $p>n(n-1)$, and no smaller exponent $p$ can be expected in general.

The local regularity of other equations of the form \eqref{FDu equ} were also studied.
Notably, in \cite{DK14}, a counterpart of \cite{BD11} was proved for the so-called complex $k$-Hessian equation under the assumption that $u$ belongs to $W^{2,p}_{\loc}$ with $p> n(k-1)$.
 
The goal of the present paper is to extend the approach of \cite{BD11} and \cite{DK14} to more general nonlinear complex degenerate elliptic equations.
We will introduce simple conditions on the nonlinearity $F$ to obtain general local regularity results, thus considerably broadening the field of application of this method.

In particular, we shall see that our results apply to the so-called complex $k$-Monge-Amp\`ere equation or $\MA_k$-equation, $k \in \ens{1,\ldots,n}$, that has recently received much attention (\cite{HL09,Sad18,Din20-pre-1, Din20-pre-2}):
$$
\prod_{1 \leq i_1<\ldots<i_k \leq n} \left(\lambda_{i_1}(\DDC u)+\ldots+\lambda_{i_k}(\DDC u)\right)=f,$$
where $\lambda_1(\DDC u), \ldots, \lambda_n(\DDC u)$ denote the eigenvalues of $\DDC u$.
For $k=1$ we have the Monge-Ampère equation $\det (\DDC u)=f$ and for $k=n$ this is the Poisson equation $\Delta u=f$.

Interior estimates for this equation in the real setting have been studied recently in \cite{Din20-pre-2}.
In the complex setting this operator was discussed in \cite{Sad18} and, in the special case $k=n-1$, also in \cite{TW17}.
It has been shown in \cite{Din20-pre-1} that this operator does not satisfy an integral comparison principle, which makes the associated potential theory much harder to be developed.
Finally, let us also mention that the Dirichlet problem associated to this operator was studied in \cite{Zho13} using a probabilistic approach.

The outline of the paper is as follows.
After recalling some basic notations, we precise in Section \ref{sect main thm} what kind of nonlinear operators $F$ are considered in this article and we present our main results.
Their proofs are the purpose of Section \ref{sect proof main thm}.
Some examples of Hessian equations covered by such a framework are then given in Section \ref{sect examples}.
Finally, in Section \ref{sect Mk} we detail the case study of the $\MA_k$-equation.

\subsection{General notations}\label{sect nota}

\begin{itemize}
\item
All along this work, $\Omega \subset \C^n$ ($n \geq 1$) is a nonempty open bounded connected subset.

\item
Let $\Herm^n$ be the set of $n \times n$ Hermitian matrices.
The $(i,j)$-th entry of a matrix $A \in \Herm^n$ will be denoted by $a_{i\bar{j}}$.
We recall that $\Herm^n$ is a vector space over the field $\R$ of dimension $n^2$. The inner product on $\Herm^n$ is the standard Frobenius inner product:
$$\ps{A}{B}_{\Herm^n}=\Trace(AB) \in \R, \quad A,B \in \Herm^n.$$

The $n \times n$ identity matrix will be denoted by $\Id$.

We will use the classical cones
$$
\matcone_n=\ens{A \in \Herm^n \st A>0},
\quad
\matcone_1=\ens{A \in \Herm^n \st \Trace(A)>0}.
$$

\item
The Fréchet derivative at $A \in \matcone$ of a function $F \in C^1(\matcone,\R)$, defined on a non empty open subset $\matcone \subset \Herm^n$, will be denoted by $DF(A) \in \lin{\Herm^n,\R}$.
It can be identified with the matrix $\left(\frac{\partial F}{\partial a_{i\bar{j}}}(A)\right)_{1 \leq i,j \leq n} \in \Herm^n$ through the formula
$$DF(A) B=\sum_{1 \leq i,j \leq n} \frac{\partial F}{\partial a_{i\bar{j}}}(A) b_{i\bar{j}}, \quad B \in \Herm^n.$$

\item
In this work we use the following convention for ellipticity.
We say that a function $F:\matcone \longrightarrow \R$ is:
\begin{itemize}
\item
(degenerate) elliptic in $\matcone$ if
$$A \leq B \quad \Longrightarrow \quad F(A) \leq F(B),$$
for every $A,B \in \matcone$, where we recall that $A \leq B$ (resp. $A<B$) means that the Hermitian matrix $B-A$ is non-negative-definite (resp. positive-definite).

\item
uniformly elliptic in $\matcone$ if there exist $0<m \leq M$ such that
$$A \leq B \quad \Longrightarrow \quad  m\norm{B-A}_{\Herm^n} \leq F(B)-F(A) \leq M\norm{B-A}_{\Herm^n},$$
for every $A,B \in \matcone$.
\end{itemize}

For instance with these conventions the equation $\Delta u=f$ is (uniformly) elliptic.

\item
The complex Hessian is denoted by $\DDC u=\left(u_{i \bar{j}}\right)_{1 \leq i,j \leq n}$, where we use the standard notations $u_j$ and $u_{\bar{j}}$ to denote, respectively, $\frac{\partial{u}}{\partial z_j}=\frac{1}{2}\left(\frac{\partial{u}}{\partial x_j}-i\frac{\partial{u}}{\partial y_j}\right)$ and $\frac{\partial{u}}{\partial \bar{z}_j}=\frac{1}{2}\left(\frac{\partial{u}}{\partial x_j}+i\frac{\partial{u}}{\partial y_j}\right)$.
The Laplacian is denoted by $\Delta u=\sum_{j=1}^n u_{j \bar{j}}=\Trace(\DDC u) \in \R$.
Note that it is the complex Laplacian, so it is one fourth the real Laplacian.

\item
Finally, we shall denote by $C(n), C(n,\Omega)$, etc. a positive number that may change from line to line but that depends only on the quantities indicated between the brackets.

\end{itemize}

\subsection{Main results}\label{sect main thm}

Throughout this work we will always make the following assumptions:
\begin{enumerate}[(a)]
\item\label{domain}
\textbf{Domain:}
$\matcone \subset \Herm^n$ is a nonempty open convex cone such that $\matcone_n\subset\matcone \subset \matcone_1$.

\item\label{reg f}
\textbf{Regularity:}
$F \in C^1(\clos{\matcone},\R)$.

\item\label{pos f}
\textbf{Positivity:}
$F>0$ in $\matcone$.

\item\label{F hom}
\textbf{Homogeneity:}
$F$ is positively homogeneous of degree $\degr>0$ (i.e. $F(\alpha A)=\alpha^\degr F(A)$ for ever $\alpha>0$ and $A \in \matcone$).

\item\label{hyp cvx}
\textbf{Concavity:}
$F^{1/\degr}$ is concave in $\matcone$.

\item\label{hyp comp with MA}
\textbf{Comparison with determinant:}
there exists $C>0$ such that
\begin{equation}\label{ineq Garding F}
F(P)^{1/d} \geq C (\det(P))^{1/n}, \quad \forall P \in \matcone_n.
\end{equation}

\end{enumerate}

Some examples will be presented in Section \ref{sect examples} below.

The key assumption here is the assumption \ref{hyp comp with MA}, that will help us to compare our general nonlinear equation $F(\DDC u)=f$ with the complex Monge-Ampère equation and thus use the recent uniform estimates obtained in \cite{ADO20}.

The assumption \ref{domain} is quite standard (see e.g. \cite[pp. 261-262]{CNS85}).
Note that this excludes the case of the whole space $\matcone=\Herm^n$.

It is important to point out that we cannot only consider homogeneous functions of degree $1$ by normalizing $F$ into $F^{1/\degr}$ since this normalization is not in general regular up to the boundary (e.g. $F(A)=\det(A)^{1/n}$ in $\matcone=\matcone_n$).

Finally, by concavity and homogeneity, we can check that $F$ satisfies \eqref{ineq Garding F} for every $A \in \matcone$ if, and only if,
$$F(A+P)^{1/\degr} \geq F(A)^{1/\degr}+C\left(\det P\right)^{1/n},$$
 for every $A \in \matcone$ and $P \in \matcone_n$.
In particular, $F$ is necessarily elliptic in $\matcone$ (but, in general, not uniformly elliptic).
We refer for instance to \cite[Remark 1.3]{ADO20} for more details.

The first result of the present paper is the following:

\begin{theorem}\label{main thm}
Let $\matcone$ and $F$ satisfy the above standing assumptions \ref{domain}, \ref{reg f}, \ref{pos f}, \ref{F hom}, \ref{hyp cvx} and \ref{hyp comp with MA}.
Let $f>0$ in $\Omega$ with $f^{1/\degr} \in C^{1,1}(\Omega)$.
Let $u \in W^{2,p}_{\loc}(\Omega)$ with $(\DDC u)(z) \in \matcone$ for a.e. $z \in \Omega$ satisfy (almost everywhere)
$$F(\DDC u)=f \quad \mbox{ in } \Omega,$$
and assume that
\begin{equation}\label{cond p}
p>n\max\ens{\degr-1,1}.
\end{equation}
Then, for every non empty open subset $\omega \subset \subset \Omega$, we have $\Delta u \in L^{\infty}(\omega)$ with
$$\sup_{\omega} \Delta u \leq R,$$
for some $R>0$ depending only on $n,p,\degr,\dist(\omega,\partial\Omega),\norm{\DDC u}_{L^p(\Omega_\epsilon)}$, $\norm{\Delta (f^{1/\degr})}_{L^{\infty}(\Omega_\epsilon)}$ and $\inf_{\Omega_\epsilon} f$ if $\degr>1$ (resp. $\sup_{\Omega_\epsilon} f$ if $\degr<1$), where $\Omega_\epsilon=\ens{z \in \Omega \st \dist(z,\partial\Omega)>\epsilon}$ and $\epsilon>0$ is some number depending only on $\dist(\omega,\partial\Omega)$.
\end{theorem}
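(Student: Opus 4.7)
The plan is to follow the strategy of \cite{BD11,DK14}, adapted to our abstract setting. The proof naturally splits into (i) an a priori $L^\infty$ bound on $\Delta u$ assuming $u$ is smooth and (ii) a regularization argument reducing the actual $W^{2,p}_{\loc}$ assumption to the smooth case. For (ii), I would approximate $u$ by inf-convolutions or by solving the Dirichlet problem on subdomains with perturbed operators $F(\DDC u)+\epsilon(\Delta u)^{\degr}=f_\epsilon$, and pass to the limit only at the very end, so that all constants appearing in the a priori estimate depend only on the quantities allowed in the theorem.

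For the core a priori estimate, set $G:=F^{1/\degr}$, so that $G$ is concave, positively homogeneous of degree $1$, and $G(\DDC u)=f^{1/\degr}$. Differentiate this identity in $z_k$ and $\bar z_k$, then sum over $k$. Since $G$ is concave on $\Herm^n$, the second-variation term has a favorable sign and one obtains the pointwise inequality
$$
G^{i\bar j}(\DDC u)\,\partial_{i\bar j}(\Delta u) \;\geq\; \Delta\bigl(f^{1/\degr}\bigr) \quad \text{in } \Omega,
$$
where $G^{i\bar j}=\partial G/\partial a_{i\bar j}$. The right-hand side is bounded in $L^\infty(\Omega_\epsilon)$ by hypothesis. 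Now pick a cutoff $\eta\in C_c^\infty(\Omega_\epsilon)$ with $\eta\equiv 1$ on $\omega$ and consider the auxiliary quantity $v:=\eta^{2}\Delta u + A|z|^{2}$ for a suitable $A$. A direct computation of $L v$, with $L:=G^{i\bar j}\partial_{i\bar j}$, combined with the above inequality yields $L v \geq -h$ where $h$ gathers derivatives of $\eta$ multiplied by $\Delta u$, $|\nabla \Delta u|$ and the coefficients $G^{i\bar j}$.

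The final step is an ABP/maximum principle for $L$, applied on $\Omega_\epsilon$: this is precisely the content of the uniform estimates of \cite{ADO20}, whose standing hypothesis is exactly our comparison assumption \ref{hyp comp with MA}, namely $F^{1/\degr}\geq C\det^{1/n}$. The estimate gives $\sup_{\omega} v \leq C\,\|\,h^{+}\,\|_{L^{q}(\Omega_\epsilon)}^{\alpha}$ for appropriate $q,\alpha$. To control $\|h\|_{L^q}$ one invokes that $F$ is homogeneous of degree $\degr$, so $G^{i\bar j}$ is homogeneous of degree $\degr-1$ in $\DDC u$; hence $|G^{i\bar j}|\lesssim |\DDC u|^{\max\{\degr-1,0\}}$, while $|\DDC u|$ itself must be paired with this coefficient via H\"older. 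Tracking the exponents produces exactly the threshold $p>n\max\{\degr-1,1\}$ of \eqref{cond p}, and the dichotomy between $\inf f$ ($\degr>1$) and $\sup f$ ($\degr<1$) arises when converting between $f$ and $f^{1/\degr}$ in the lower bound for $G^{i\bar j}$.

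The main obstacle I expect is the sharp exponent bookkeeping in this last step: one has to choose the auxiliary function and the power of $\eta$ so that, after applying the ABP of \cite{ADO20}, every factor carrying $\DDC u$ is absorbed in $L^{p}$ using only the integrability $p>n\max\{\degr-1,1\}$, while every factor depending on the data is absorbed in $L^{\infty}$. A secondary difficulty is that the differentiated identity leading to $L(\Delta u)\geq\Delta(f^{1/\degr})$ is only formal for $W^{2,p}$ solutions, which is why the approximation scheme in step (i) must be carefully designed to commute with differentiation; in practice this is handled by writing the inequality in its integrated (weak) form against test functions $\varphi\geq 0$ and checking stability of both sides under the chosen regularization.
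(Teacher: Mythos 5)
Your overall skeleton (normalize to $G=F^{1/\degr}$, exploit concavity, invoke the ABP-type estimate of \cite{ADO20}, and track the homogeneity of $DG$ to produce the threshold $p>n\max\{\degr-1,1\}$ and the $\inf f$ / $\sup f$ dichotomy) matches the paper. But two of your concrete choices create genuine gaps. First, the auxiliary function $v=\eta^{2}\Delta u+A|z|^{2}$ is \emph{linear} in $\Delta u$, and you yourself note that the resulting error term $h$ contains $|\nabla\Delta u|$, i.e.\ third derivatives of $u$; these are not controlled by any quantity allowed in the statement, even for smooth $u$, so the estimate cannot close. The cross terms $G^{i\bar j}\eta_i(\Delta u)_{\bar j}$ have nothing to be absorbed into. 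The paper instead takes $w=\eta\,(T_\epsilon u)^{\alpha}$ with $\alpha=1+p/q\geq 2$ and $\eta=(1-|z|^2)^{\beta}$: the power $\alpha$ generates the quadratic gradient term $\alpha(\alpha-1)(T_\epsilon u)^{\alpha-2}\,\varphi(\nabla T_\epsilon u,\nabla T_\epsilon u)$ with a favorable sign (by ellipticity of $G$), which absorbs the cross terms via Cauchy--Schwarz with a tuned weight $t=(\alpha-1)\eta/T_\epsilon u$, and $\beta$ is chosen so that the ABP estimate yields $\sup w\leq C\bigl((\sup w)^{1-2/\beta}+1\bigr)$, an inequality that closes because the exponent is strictly less than $1$.

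Second, your regularization scheme does not work in this generality. Differentiating $G(\DDC u)=f^{1/\degr}$ twice is not merely ``formal'' for $W^{2,p}$ solutions: the coefficients $G^{i\bar j}(\DDC u)$ are only measurable, so the integrated/weak form you propose cannot be given meaning by moving derivatives onto test functions, and solving perturbed Dirichlet problems (or using inf-convolutions) requires an existence/stability theory for the abstract operator $F$ that is not among the hypotheses. The paper's device is to replace $\Delta u$ throughout by the averaged second-difference operator $T_\epsilon u=\frac{n+1}{\epsilon^2}(u_\epsilon-u)$; the substitute for your differentiated inequality is then the purely pointwise consequence of concavity plus Jensen's inequality, $G\bigl((\DDC u)_\epsilon\bigr)\geq\bigl(G(\DDC u)\bigr)_\epsilon$, which gives $L_u(-T_\epsilon u)\leq -T_\epsilon(f^{1/\degr})$ using no derivatives of $u$ beyond the second. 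One then lets $\epsilon\to 0$ at the very end, using $T_\epsilon u\rightharpoonup\Delta u$ weakly in $L^p$ and the weak closedness of $L^\infty$-balls, to transfer the uniform bound from $T_\epsilon u$ to $\Delta u$. Without these two devices your argument would not go through; with them, the rest of your exponent bookkeeping is essentially what the paper does.
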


Theorem \ref{main thm} generalizes \cite[Theorem]{BD11} and \cite[Theorem 4.1]{DK14}, where the particular cases of the complex Monge-Ampère equation and the complex $k$-Hessian equation were considered.

%Recall that $W^{2,p}_{\loc}(\Omega) \subset C^0(\Omega)$ for $p>\frac{n}{2}$ by Sobolev embeddings (see e.g. \cite{}), which in our case is implied by the condition \eqref{cond p} since $\theta \geq 1$.

An important consequence of Theorem \ref{main thm} and of the classical theory for uniformly elliptic real equations from \cite{CC95} will be the following local regularity result:

\begin{corollary}\label{cor main thm}
Under the framework of Theorem \ref{main thm}, assume in addition that $F$ satisfies the following property:
\begin{enumerate}[(a)]
\setcounter{enumi}{6}
\item\label{prop for UE}
\textbf{Sets of uniform ellipticity:}
there exists $R_0>0$ such that, for every $R>R_0$, $F$ is uniformly elliptic in $\matcone_R$, where
\begin{equation}\label{def CR}
\matcone_R=\ens{A \in \matcone \st  \Trace(A)<R, \quad
\frac{1}{R}<F(A)}.
\end{equation}
\end{enumerate}
Then, we have the property
\begin{equation}\label{Cinf reg}
F \in C^{\infty}(\matcone), \quad f \in C^{\infty}(\Omega) \quad \Longrightarrow \quad u \in C^{\infty}(\Omega).
\end{equation}

\end{corollary}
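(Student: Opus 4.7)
The plan is to use Theorem~\ref{main thm} to trap $\DDC u$ inside the uniform-ellipticity region $\matcone_R$ provided by hypothesis~\ref{prop for UE}, and then to bootstrap with the classical Evans--Krylov theorem and Schauder estimates.

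I would first fix a relatively compact subdomain $\omega \subset \subset \Omega$ and apply Theorem~\ref{main thm} to obtain $\Delta u \in L^\infty(\omega)$. Since $f \in C^\infty(\Omega)$ is positive we also have $\inf_\omega f > 0$, so choosing $R$ larger than $R_0$, $\sup_\omega \Delta u$ and $1/\inf_\omega f$, we get $\DDC u(z) \in \matcone_R$ for a.e.\ $z \in \omega$, and hypothesis~\ref{prop for UE} then ensures that $F$ is uniformly elliptic on the range of $\DDC u$. Since $F > 1/R$ on $\matcone_R$, we can equivalently rewrite the equation as
$$
F^{1/\degr}(\DDC u) = f^{1/\degr},
$$
where the nonlinearity $F^{1/\degr}$ is smooth on $\matcone_R$, concave by~\ref{hyp cvx} and uniformly elliptic, while the right-hand side $f^{1/\degr}$ is $C^\infty$ (as a positive power of a smooth positive function).

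Next I would identify $\C^n \simeq \R^{2n}$, observe that $\DDC u$ is a linear function of the real Hessian $\DDR u$, and read the above equation as a fully nonlinear concave uniformly elliptic PDE on $\R^{2n}$. The Evans--Krylov theorem (see e.g.\ \cite{CC95}) should then yield $u \in C^{2,\alpha}(\omega)$ for some $\alpha \in (0,1)$. I expect this step to be the main obstacle: the Hermitian uniform ellipticity given by hypothesis~\ref{prop for UE} must translate into genuine uniform ellipticity for the resulting real PDE, which is routine but relies essentially on the linearity of the map $\DDR u \mapsto \DDC u$.

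Once $u \in C^{2,\alpha}$, the bootstrapping to $C^\infty$ would be standard. Differentiating $F^{1/\degr}(\DDC u) = f^{1/\degr}$ in any real direction $e$, the function $v = \partial_e u$ satisfies a linear uniformly elliptic equation whose coefficients $DF^{1/\degr}(\DDC u)$ lie in $C^\alpha$ (since $\DDC u \in C^\alpha$ and $DF^{1/\degr}$ is smooth on $\matcone_R$) and whose right-hand side is $C^\infty$. Schauder estimates then give $v \in C^{2,\alpha}$, hence $u \in C^{3,\alpha}$, and iterating this procedure raises the regularity order arbitrarily, yielding $u \in C^\infty(\omega)$. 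Since $\omega \subset \subset \Omega$ was arbitrary, \eqref{Cinf reg} follows.
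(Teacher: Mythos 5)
Your overall route is exactly the paper's: use Theorem~\ref{main thm} to get $\Delta u\in L^\infty_{\loc}$, combine with $F(\DDC u)=f>1/R_2$ to trap $\DDC u$ in $\matcone_R$, pass to a real concave uniformly elliptic equation on $\R^{2n}$, apply the Caffarelli--Cabr\'e $C^{2,\alpha}$ theory, and bootstrap with Schauder. However, the step you dismiss as ``routine'' hides two genuine gaps.

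First, the nonlinearity is only defined on (the image in $\Sym^{2n}$ of) $\matcone_R$, which is a proper convex subset of $\Sym^{2n}$, while the $C^{2,\alpha}$ theory you want to invoke is stated for operators defined on all of $\Sym^{2n}$. This matters here because $u$ is only a $W^{2,p}$ \emph{strong} solution, not $C^{1,1}$: the relevant result is \cite[Theorem 8.1]{CC95} for viscosity solutions, and to use it one must (i) note that $u,\Delta u\in L^\infty_{\loc}$ give $u\in W^{2,p}_{\loc}$ for \emph{all} $p<\infty$ by Calder\'on--Zygmund, so in particular $p\geq 2n$, (ii) invoke that a $W^{2,p}$ strong solution with $p\geq 2n$ is a $C$-viscosity solution, and (iii) extend the operator from $\pi^{-1}(\iota(2\matcone_R))$ to a concave uniformly elliptic operator $\bar F$ on all of $\Sym^{2n}$ (the paper does this as an infimum of affine functions with slopes pinched between the ellipticity constants). ``Evans--Krylov'' in its classical form for $C^2$ solutions does not apply, and without the global extension the viscosity formulation does not even make sense. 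Your proposal mentions none of (i)--(iii).

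Second, your reformulation via $G=F^{1/\degr}$ is both unnecessary and slightly dangerous: hypothesis~\ref{prop for UE} asserts uniform ellipticity of $F$, not of $F^{1/\degr}$, on $\matcone_R$. Since $DG=\tfrac1\degr F^{1/\degr-1}DF$ and $\matcone_R$ need not be bounded (nor $F$ bounded above on it --- see the remark after the corollary), the lower ellipticity bound for $G$ when $\degr>1$ does not follow directly; you would have to further restrict to $\matcone_R\cap\{F<\sup_\omega f+1\}$ or simply keep the equation in the form $F(\DDC u)=f$, as the paper does. The final Schauder bootstrap is fine in spirit, with the small caveat that since $\tilde F$ lives only on an open convex subset one must check that the segments of Hessians arising in the difference-quotient argument stay inside it.
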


\begin{remark}
Note that $\matcone_R$ is convex because $F$ is concave.
However, it is never a cone and it does not contain $\matcone_n$.
Note as well that $\lim_{R \to +\infty} \matcone_R=\matcone$.

We also point out that we do not need to assume that $\clos{\matcone_R}$ is compact.
For instance, $\matcone_R$ is not bounded for the simplest example $F(A)=\Trace(A)$ on $A \in \matcone_1$ ($n \geq 2$).
However, this is a rather pathological case and we will see in Proposition \ref{prop hatF} below that $\matcone_R$ is always bounded whenever
$$\partial \matcone \cap \partial \matcone_1=\ens{0}.$$
\end{remark}

\begin{remark}
Whether the condition \eqref{cond p} yields the smallest value of $p$ to induce the $C^{\infty}$-regularity \eqref{Cinf reg} may depend on the equation itself.
This condition is for instance sharp in the case of the Monge-Ampère equation, see \cite[p. 412]{BD11}, but it is still an open problem for other equations such as the complex $k$-Hessian equation, the $k$-Monge-Ampère equation (see Section \ref{sect lower bound} below), etc.
\end{remark}

The rest of the article is organized as follows.
In the next section we prove our main results.
In Section \ref{sect examples} we present some examples of Hessian equations that are covered by our framework.
Finally, in Section \ref{sect Mk}, we discuss a bit more in detail the case of the $k$-Monge-Ampère equation.

\section{Proofs of the main results}\label{sect proof main thm}

The proof of Theorem \ref{main thm} is inspired from the proof of \cite[Theorem 1]{Tru80} and the ideas of \cite{BD11} (see also \cite[Section 4]{DK14}).

We introduce the normalization
$$G(A)=(F(A))^{1/\degr}, \quad A \in \matcone.$$
Clearly, $G$ is elliptic in $\matcone$.
The equation $F(\DDC u)=f$ then becomes
$$G(\DDC u)=f^{1/\degr}.$$

Throughout Section \ref{sect proof main thm} the notation $L_u$ is exclusively saved for the linearization of $G$ about $A=(\DDC u)(z)$:
\begin{equation}\label{def Lu}
(L_u w)(z)=\sum_{1 \leq i,j \leq n} \frac{\partial G}{\partial a_{i\bar{j}}}((\DDC u)(z)) w_{i\bar{j}}(z).
\end{equation}

Note that $L_u$ is (degenerate) elliptic and that its coefficients are Lebesgue measurable (as composition of a continuous function with a Lebesgue measurable function).

\subsection{Preliminaries}

\paragraph{Geometric configuration.}

Let $\omega \subset \subset \Omega$ and set $\delta=\dist(\omega,\partial\Omega)>0$.
For $z_0 \in \omega$, let
$$\tilde{\omega}=\omega \cap B_{\delta/4}(z_0),$$
where $B_R(z_0) \subset \C^n$ denotes the open ball of center $z_0$ and radius $R>0$.
Up to the transformation $z \mapsto (z-z_0)/(\delta/2)$, we can always assume that $B_{\delta/2}(z_0)$ is the open unit ball, that will simply be denoted by $B_1$ in the sequel.
Thus, $B_\delta(z_0)$ becomes $B_2(0)$, that will simply be denoted by $B_2$.
Therefore, from now on, we are in the following geometric configuration:
\begin{equation}\label{geo config}
\tilde{\omega} \subset\subset B_1 \subset\subset B_2 \subset\subset \Omega,
\quad
\dist(\tilde{\omega},\partial B_1) \geq \frac{1}{4}\dist(\omega,\partial\Omega).
\end{equation}

\paragraph{Approximation of the Laplacian.}

In order not to consume too much regularity, we will need a suitable approximation of the Laplacian.
This is done as in \cite[p. 415]{BD11} and \cite[Lemma 4.2]{DK14} (see also \cite[Proposition 6.3]{BT76}).

\begin{lemma}\label{lem prop approxi}
For every $0<\epsilon<1$, let $T_\epsilon:L^p(B_2) \longrightarrow L^p(B_1)$ ($1 \leq p \leq \infty$) be the linear operator defined by
$$(T_\epsilon u)(z)=
\frac{n+1}{\epsilon^2} \left(u_{\epsilon}(z)-u(z)\right),$$
with
$$u_{\epsilon}(z)=\frac{1}{\mu_n(B_\epsilon(z))} \int_{B_\epsilon(z)} u \, d\mu_n,$$
where here and in what follows $\mu_n$ denotes the Lebesgue measure in $\C^n$.
Then, we have the following properties:
\begin{enumerate}[(i)]
\item\label{Tu positive}
\textbf{Positivity:}
If $u$ is subharmonic in $B_2$, then $(T_\epsilon u)(z) \geq 0$ for every $z \in B_1$.

\item\label{reg ueps}
\textbf{Regularity:}
If $u \in C^0(B_2)$, then $T_\epsilon u \in C^0(\clos{B_1})$.
If $u \in W^{2,p}(B_2)$ and $p<\infty$, then $u_\epsilon \in W^{2,p}(B_1)$ with $\DDC u_\epsilon=(\DDC u)_\epsilon$ (with a slight but obvious abuse of notation).

\item\label{item CV}
\textbf{Convergence:}
If $u \in W^{2,p}(B_2)$ and $p<\infty$, then $T_\epsilon u \rightharpoonup \Delta u$ weakly in $L^p(B_1)$ as $\epsilon \to 0$.

\item\label{UB lem}
\textbf{Uniform bound:}
If $u \in W^{2,p}(B_2)$ then, for every $0<\epsilon<1$, we have
$$\norm{T_\epsilon u}_{L^p(B_1)} \leq C(n,p) \norm{\Delta u}_{L^p(B_2)}.$$

\end{enumerate}

\end{lemma}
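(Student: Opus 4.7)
My plan is to reduce all four items to a single kernel representation of $T_\epsilon u$. I define $N_\epsilon$ on $\C^n$ as the radial distributional solution of $\Delta N_\epsilon = \rho_\epsilon - \delta_0$, where $\rho_\epsilon := \mathbf{1}_{B_\epsilon}/\mu_n(B_\epsilon)$, and I would verify three properties. First, $N_\epsilon$ vanishes outside $\clos{B_\epsilon}$, by Newton's shell theorem applied to the convolution of $\rho_\epsilon$ with the fundamental solution of $\Delta$. Second, $N_\epsilon \geq 0$, obtained by an explicit radial ODE integration (one checks that $N_\epsilon'(r)<0$ on $(0,\epsilon)$ with $N_\epsilon(\epsilon)=0$). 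Third, $\int N_\epsilon \, d\mu_n = \epsilon^2/(n+1)$, obtained by testing the equation against $v(w)=|w|^2$ (for which $\Delta v = n$) and using $\int_{B_\epsilon}|w|^2 \, d\mu_n = n\epsilon^2\mu_n(B_\epsilon)/(n+1)$. Integration by parts then yields, for smooth $u$, the identity
$$T_\epsilon u = M_\epsilon * \Delta u \quad \text{on } B_1, \qquad M_\epsilon := \frac{n+1}{\epsilon^2}\, N_\epsilon,$$
where $M_\epsilon \geq 0$, $\mathrm{supp}\, M_\epsilon \subset \clos{B_\epsilon}$, and $\int M_\epsilon\, d\mu_n = 1$ (so $M_\epsilon$ is an approximate identity).

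\textbf{Consequences.} Item \ref{Tu positive} is immediate from the submean inequality $u(z) \leq u_\epsilon(z)$ for subharmonic $u$ with $\epsilon<1$. Item \ref{reg ueps} follows from writing $u_\epsilon = u * \rho_\epsilon$: continuity is preserved by dominated convergence, and the interchange of weak second derivatives with convolution gives $\DDC u_\epsilon = (\DDC u)_\epsilon$ when $u \in W^{2,p}$. For item \ref{UB lem}, I extend the identity $T_\epsilon u = M_\epsilon * \Delta u$ from smooth $u$ to all $u \in W^{2,p}(B_2)$ by density (both sides being continuous in the $W^{2,p}$ topology) and then invoke Young's convolution inequality to conclude
$$\norm{T_\epsilon u}_{L^p(B_1)} \leq \norm{M_\epsilon}_{L^1(\C^n)}\, \norm{\Delta u}_{L^p(B_2)} = \norm{\Delta u}_{L^p(B_2)},$$
so one can even take $C(n,p)=1$. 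For item \ref{item CV}, the Taylor expansion of $u$ about $z$ (using the symmetry of $B_\epsilon$ to kill odd powers of $w$ and the computation $\int_{B_\epsilon}|w_j|^2 \, d\mu_n = \epsilon^2 \mu_n(B_\epsilon)/(n+1)$) gives $T_\epsilon u \to \Delta u$ pointwise and uniformly on compact subsets of $B_1$ for smooth $u$; combined with the uniform bound above and density of smooth functions in $W^{2,p}$, this yields the claimed weak $L^p$ convergence.

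\textbf{Main obstacle.} The principal technical step is the construction and analysis of $N_\epsilon$, in particular establishing simultaneously its compact support in $\clos{B_\epsilon}$ (Newton's shell theorem), nonnegativity (radial ODE), and correct total mass $\epsilon^2/(n+1)$ (test against $|w|^2$). Once this kernel representation is in hand, each of the four items reduces to a routine application of, respectively, the submean value inequality, convolution-commutes-with-differentiation, Young's inequality, and Taylor expansion.
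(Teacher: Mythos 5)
Your kernel representation is correct and gives a clean, self-contained proof of all four items; the paper itself does not write out a proof but instead disposes of each item by citation (the sub-mean value inequality for positivity, an adaptation of Klimek's argument for the weak convergence, and the reference \cite{FSX18} for the uniform bound). Your construction of $N_\epsilon$ checks out: Newton's shell theorem gives the support in $\clos{B_\epsilon}$, the flux computation $N_\epsilon'(r)\,\omega_{2n-1}r^{2n-1}=4\left(\mu_n(B_r)/\mu_n(B_\epsilon)-1\right)<0$ gives nonnegativity, and testing against $\abs{w}^2$ (with $\Delta\abs{w}^2=n$ for the complex Laplacian, which is the one used throughout the paper) gives total mass $\epsilon^2/(n+1)$, so the normalization $\frac{n+1}{\epsilon^2}$ is exactly accounted for. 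What your route buys is uniformity: items \ref{UB lem} and \ref{item CV} become Young's inequality and the standard approximate-identity theorem, and in fact you obtain \emph{strong} $L^p$ convergence $T_\epsilon u\to\Delta u$ for $p<\infty$, which is more than the weak convergence claimed. Two small points to tidy up: in item \ref{UB lem} the statement also covers $p=\infty$, where smooth functions are not dense in $W^{2,\infty}(B_2)$; you should first establish the a.e.\ identity $T_\epsilon u=M_\epsilon*\Delta u$ for $u\in W^{2,1}_{\loc}$ (approximating in $W^{2,1}$ on $B_{1+\epsilon}$) and only then apply Young's inequality for the given $p$. And throughout you should note that for $z\in\clos{B_1}$ and $\epsilon<1$ one has $B_\epsilon(z)\subset B_{1+\epsilon}\subset\subset B_2$, so all convolutions are well defined with the stated domains.
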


The first point is a direct consequence of the mean value inequality (see e.g. \cite[Theorem 2.4.1]{Kli91}).
The second point is not difficult to check.
For the weak convergence one can for instance adapt the proof of \cite[Proposition 4.2.6]{Kli91}.
Finally, for the uniform bound, see e.g. \cite[p. 219]{FSX18}.

\paragraph{A uniform estimate for strong supersolutions.}

Recently, it was obtained in \cite[Theorem 1.4]{ADO20} some new type of Alexandrov-Bakelman-Pucci estimate for various nonlinear complex degenerate equations.
Thanks to the assumptions of the present article, it applies in particular to our linearized operator $L_u$ defined in \eqref{def Lu} for which it results in the following:

\begin{theorem}\label{max pp}
Let $r,q>n$.
For every $g \in L^q(B_1)$ and every $w \in W^{2,r}_{\loc}(B_1) \cap C^0(\clos{B_1})$ such that
$$
\begin{dcases}
\begin{aligned}
L_u(-w) & \leq g && \text{ in } B_1, \\
w & \leq 0 && \text{ on } \partial B_1,
\end{aligned}
\end{dcases}
$$
we have
$$
\sup_{B_1} w
\leq C(n,r,q)\norm{g_+}_{L^q(B_1)},
$$
where $g_+=\max(g,0)$ denotes the positive part of $g$.
\end{theorem}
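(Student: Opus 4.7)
The plan is to verify that the linearized operator $L_u$ falls within the scope of the ABP-type estimate of \cite[Theorem 1.4]{ADO20} and then invoke that result. The essential input is a uniform lower bound on the determinant of the coefficient matrix of $L_u$, which will follow from the concavity assumption \ref{hyp cvx} combined with the comparison with determinant \ref{hyp comp with MA}.

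First I would write $L_u w = \sum_{i,j} b^{i\bar j}(z) w_{i \bar j}$ with coefficient matrix $B(z) = DG((\DDC u)(z))$, where $G = F^{1/d}$. Since $G \in C^1(\clos{\matcone})$ and $u \in W^{2,r}_{\loc}$, the matrix $B(z)$ is Hermitian and measurable; moreover ellipticity of $G$ (noted after assumption \ref{hyp comp with MA}) gives $B(z) \geq 0$.

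The key step is to show $\det B(z) \geq c > 0$ almost everywhere, with $c$ depending only on the constant in \ref{hyp comp with MA}. Fix $A \in \matcone$ and $P \in \matcone_n$. Since $\matcone$ is a convex cone and $\matcone_n \subset \matcone$, we have $A + tP \in \matcone$ for every $t > 0$. Concavity of $G$ yields the tangent inequality
$$G(A + tP) \leq G(A) + t\, DG(A) P,$$
while (as recalled just before the statement of Theorem \ref{main thm}) assumption \ref{hyp comp with MA} is equivalent to
$$G(A + tP) \geq G(A) + C t\, (\det P)^{1/n}.$$
Combining these and dividing by $t > 0$ gives
$$\Trace(B(z) P) = DG(A) P \geq C (\det P)^{1/n}, \quad \forall P \in \matcone_n,$$
at $A = (\DDC u)(z)$. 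Minimizing $\Trace(B(z) P)$ over $P > 0$ with $\det P = 1$ (which yields $n \det(B(z))^{1/n}$ by the Lagrange-multiplier / G\aa rding characterization of the determinant) then forces
$$\det(B(z))^{1/n} \geq C/n > 0.$$

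With the uniform determinant bound and non-negativity of $B(z)$ in hand, the hypotheses of \cite[Theorem 1.4]{ADO20} are satisfied and that theorem delivers exactly the desired estimate $\sup_{B_1} w \leq C(n,r,q) \|g_+\|_{L^q(B_1)}$. The main subtlety to watch is the combined use of concavity and the MA-comparison to get the pointwise determinant bound; once that is established the rest is a direct citation. There is no uniform ellipticity assumed for $L_u$ (its largest eigenvalue may blow up), but the Monge-Amp\`ere-type ABP of \cite{ADO20} is precisely tailored to this degenerate situation and only requires the determinant lower bound established above.
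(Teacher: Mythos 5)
Your proposal is correct and follows essentially the same route as the paper, which likewise proves this statement simply by invoking \cite[Theorem 1.4]{ADO20} for the linearized operator $L_u$. The extra work you do --- deriving the pointwise bound $\det\bigl(DG(\DDC u(z))\bigr)^{1/n}\geq C/n$ from concavity together with assumption \ref{hyp comp with MA} --- is exactly the verification the paper leaves implicit (it is the content of \cite[Remark 1.3]{ADO20}, cited earlier in the text), and your argument for it is sound.
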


\paragraph{Estimate of the operator norm.}

We will need the following estimate:

\begin{lemma}
There exists $C>0$ such that, for every $A \in \matcone$,
\begin{equation}\label{estim DG}
\norm{DG(A)}_{\lin{\Herm^n,\R}} \leq C (G(A))^{1-\degr} \norm{A}^{\degr-1}_{\Herm^n}.
\end{equation}
\end{lemma}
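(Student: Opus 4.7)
The plan is to reduce the bound on $DG$ to a bound on $DF$, which by homogeneity reduces to a continuity/compactness statement on the unit sphere of $\clos{\matcone}$.

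First, I would apply the chain rule to $G=F^{1/\degr}$. Since $F>0$ on $\matcone$ by assumption \ref{pos f}, the scalar map $t\mapsto t^{1/\degr}$ is smooth near $F(A)$, so
\[
DG(A)=\tfrac{1}{\degr}F(A)^{1/\degr-1}DF(A)=\tfrac{1}{\degr}G(A)^{1-\degr}DF(A),
\]
and hence
\[
\norm{DG(A)}_{\lin{\Herm^n,\R}}=\tfrac{1}{\degr}G(A)^{1-\degr}\norm{DF(A)}_{\lin{\Herm^n,\R}}.
\]
Thus the claim reduces to proving $\norm{DF(A)}_{\lin{\Herm^n,\R}}\le \tilde C\,\norm{A}_{\Herm^n}^{\degr-1}$ for some universal constant $\tilde C$.

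Next, I would use the homogeneity hypothesis \ref{F hom}: differentiating the identity $F(\alpha A)=\alpha^{\degr}F(A)$ in the variable $A$ shows that $DF$ is positively homogeneous of degree $\degr-1$, that is, $DF(\alpha A)=\alpha^{\degr-1}DF(A)$ for every $\alpha>0$ and $A\in\matcone$. Writing $A=\norm{A}_{\Herm^n}\cdot(A/\norm{A}_{\Herm^n})$ (which is legitimate since $0\notin\matcone$ because $\matcone\subset\matcone_1$ and $\matcone$ is open, so $\norm{A}_{\Herm^n}>0$), this yields
\[
\norm{DF(A)}_{\lin{\Herm^n,\R}}=\norm{A}_{\Herm^n}^{\degr-1}\,\norm{DF(A/\norm{A}_{\Herm^n})}_{\lin{\Herm^n,\R}}.
\]

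It then remains to bound $\norm{DF(B)}$ uniformly for $B$ ranging over $\clos{\matcone}\cap\{\norm{\cdot}_{\Herm^n}=1\}$. This set is closed in the unit sphere of $\Herm^n$, hence compact, and by the regularity assumption \ref{reg f} the map $B\mapsto DF(B)$ is continuous on $\clos{\matcone}$; therefore it attains a finite maximum $\tilde C$ on that compact set. Combining this with the previous two displays yields the desired estimate with $C=\tilde C/\degr$. There is no real obstacle here: the proof is essentially bookkeeping with the chain rule, Euler-type homogeneity for $DF$, and the crucial use of $C^{1}$-regularity \emph{up to the boundary} of $\matcone$ (assumption \ref{reg f}) which makes the compactness argument on $\clos{\matcone}\cap\{\norm{\cdot}_{\Herm^n}=1\}$ available even though $\matcone\cap\{\norm{\cdot}_{\Herm^n}=1\}$ itself is only relatively open.
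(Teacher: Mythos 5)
Your argument is correct and follows essentially the same route as the paper: homogeneity of $DF$ of degree $\degr-1$ plus continuity of $DF$ on the compact set $\clos{\matcone}\cap\ens{\norm{\cdot}_{\Herm^n}=1}$ gives $\norm{DF(A)}\leq \tilde C\norm{A}^{\degr-1}_{\Herm^n}$, and the chain-rule identity $DG(A)=\frac{1}{\degr}F(A)^{(1/\degr)-1}DF(A)$ converts this into \eqref{estim DG}. You also correctly flag that this is precisely the point where $C^1$-regularity up to the boundary (assumption \ref{reg f}) is used.
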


\begin{proof}
By continuity of $DF$ on the compact $\ens{A \in \clos{\matcone} \st \norm{A}_{\Herm^n}=1}$ and the fact that $DF$ is homogeneous of degree $\degr-1$, we have, for some $C>0$,
$$
\norm{DF(A)}_{\lin{\Herm^n,\R}} \leq C \norm{A}^{\degr-1}_{\Herm^n}, \quad \forall A \in \matcone.
$$
The desired estimate \eqref{estim DG} then follows from the computation
$$DG(A)B=\frac{1}{\degr} (F(A))^{(1/\degr)-1} DF(A)B, \quad A \in \matcone, \, B \in \Herm^n.$$
\end{proof}

We point out that it is only to show \eqref{estim DG} that we use that $F$ is $C^1$ up to the boundary.

Despite not needed, we mention that the inequality \eqref{estim DG} also shows that the coefficients of the operator $L_u$ belong to $L^{\infty}$ if $\degr=1$, and to $L^{p/(\degr-1)}$ if $\degr>1$, which are then in $L^n$ by the assumption \eqref{cond p}.

\paragraph{Jensen's inequality in convex subsets of $\Herm^n$.}

The following version of Jensen's inequality will be needed (see e.g. \cite[Lemma 2.8.1, p.76]{Fer67}):

\begin{lemma}\label{lem Jensen}
Let $\matcone \subset E$ be a non empty open convex set of a real finite dimensional space $E$ and let $G:\matcone \longrightarrow \R$ be a concave function.
Let $\Omega \subset \C^n$ ($n \geq 1$) be a non empty open bounded subset and $H \in L^1(\Omega,E)$ be such that $H(z) \in \matcone$ for a.e. $z \in \Omega$.
Then, we have
$$\frac{1}{\mu_n(\Omega)}\int_\Omega H(z) \, d\mu_n \in \matcone,$$
and
$$
G\left(
\frac{1}{\mu_n(\Omega)}
\int_\Omega H(z) \, d\mu_n
\right)
\geq
\frac{1}{\mu_n(\Omega)}\int_\Omega G\left(H(z)\right) \, d\mu_n,
$$
(the right-hand side possibly being equal to $-\infty$).
\end{lemma}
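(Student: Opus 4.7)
The plan is to argue in two stages: first verify that the average $\bar{H} := \frac{1}{\mu_n(\Omega)} \int_\Omega H\,d\mu_n$ belongs to the open set $\matcone$, and then derive Jensen's inequality from the existence of a supporting affine function to $G$ at the interior point $\bar{H}$.

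For the first step, I would argue by hyperplane separation in the finite-dimensional space $E$. Suppose, for contradiction, that $\bar{H} \notin \matcone$. Since $\matcone$ is an open convex set, there exist a nonzero linear functional $\xi \in E^*$ and a constant $c \in \R$ such that $\xi(A) > c$ for every $A \in \matcone$ (strict because $\matcone$ is open) while $\xi(\bar{H}) \leq c$. Applying $\xi$ pointwise to $H$ gives $\xi(H(z)) > c$ for a.e.\ $z \in \Omega$, so that integration yields $\xi(\bar{H}) \geq c$; equality would force $\xi \circ H = c$ almost everywhere, contradicting the strict inequality a.e. Therefore $\xi(\bar{H}) > c$, contradicting the separating hyperplane, so $\bar{H} \in \matcone$.

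For the second step, since $G$ is concave on the open convex set $\matcone$ in finite dimension, it is automatically continuous on $\matcone$ and admits a supergradient at every interior point. Hence there exists $\xi_0 \in E^*$ with
$$G(A) \leq G(\bar{H}) + \xi_0(A - \bar{H}), \quad \forall A \in \matcone.$$
Substituting $A = H(z)$ (valid a.e.) and integrating over $\Omega$, the affine remainder averages out to zero by the very definition of $\bar{H}$, and I obtain
$$\frac{1}{\mu_n(\Omega)} \int_\Omega G(H(z))\,d\mu_n \leq G(\bar{H}),$$
which is the desired inequality. The function $G \circ H$ is measurable by continuity of $G$, and the pointwise upper bound just used is integrable (since $H$ is), so the left-hand integral is well defined with value in $[-\infty, +\infty)$, consistent with the statement.

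The only delicate point is the first step: one must rule out the possibility that $\bar{H}$ lies on $\partial \matcone$. This rests on the strict separation afforded by the openness of $\matcone$, combined with the elementary observation that a measurable function which is almost everywhere strictly greater than a constant has a strictly larger integral average. Once membership of $\bar{H}$ in $\matcone$ is secured, the classical supporting-hyperplane argument disposes of the inequality in one line.
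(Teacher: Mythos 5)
Your proof is correct: the separation argument for $\bar H\in\matcone$ and the supergradient (supporting affine function) argument for the inequality are both sound, including the measurability and integrability checks. The paper does not prove this lemma itself but only cites \cite[Lemma 2.8.1]{Fer67}, and your argument is essentially the standard proof given there.
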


\subsection{Proof of Theorem \ref{main thm}}

We are now ready to prove our first main result.

\begin{enumerate}[1)]
\item
\textbf{Strategy of the proof.}

Let $0<\epsilon<1$.
For $\alpha,\beta \in [2,+\infty)$ (to be determined later) let us introduce an auxiliary function $w$ given by
$$w(z)=\eta(z) (T_\epsilon u(z))^{\alpha}, \quad z \in B_1,$$
where
$$\eta(z)=\dist(z,\partial B_1)^{2\beta}
=\left(1-\abs{z}^2\right)^\beta.$$

Note that:
\begin{itemize}
\item
$w$ is unambiguously defined since $T_\epsilon u \geq 0$ by item \ref{Tu positive} of Lemma \ref{lem prop approxi} and subharmonicity of $u$, which follows from the assumptions that $\DDC u \in \matcone \subset \matcone_1$ (see e.g. \cite[Theorem 2.5.8]{Kli91}).

\item
$w \in C^0(\clos{B_1})$ by item \ref{reg ueps} of Lemma \ref{lem prop approxi} since $u \in C^0(\Omega)$ by the Sobolev embedding $W^{2,p}_{\loc}(\Omega) \subset C^0(\Omega)$ as $p>n$ by assumption \eqref{cond p}.

\item
$w \in W^{2,p}_{\loc}(B_1)$ thanks to the regularity of $u$ and Sobolev embeddings (using that $p \geq n$).

\end{itemize}

The goal will be to bound $\sup_{B_1} w$ from above by a positive number depending only on the quantities indicated in the statement of Theorem \ref{main thm}.
This will also provide an upper bound for $\sup_{\tilde{\omega}} T_\epsilon u$ since $\eta(z) \geq \left(\dist(\omega,\partial\Omega)/4\right)^{2\beta}>0$ for $z \in \tilde{\omega}$ (recall \eqref{geo config}).
This will in turn imply that $\Delta u \in L^{\infty}(\tilde{\omega})$ with the same bound as for $T_\epsilon u$.
Indeed, denoting by $C$ a bound for $T_\epsilon u$ we will have shown that $T_\epsilon u \in S$, where
$$S=\ens{g \in L^{\infty}(\tilde{\omega}), \quad \norm{g}_{L^{\infty}(\tilde{\omega})} \leq C}.$$
We can check that this set is closed in $L^p(\tilde{\omega})$ (using for instance the partial converse of the Lebesgue dominated convergence theorem \cite[Theorem 3.12]{Rud87}).
Since it is also clearly convex, it is then weakly closed in $L^p(\tilde{\omega})$ (see e.g. \cite[Theorem 3.12]{Rud91}).
As $T_\epsilon u \weakto \Delta u$ weakly in $L^p(\tilde{\omega})$ (item \ref{item CV} of Lemma \ref{lem prop approxi}) and $T_\epsilon u \in S$, it follows that $\Delta u \in S$ as well, which is exactly what we want.

Now, in order to bound $\sup_{B_1} w$ we are going to use the condition \eqref{cond p} on $p$ to show that $(L_u (-w))_+ \in L^q(B_1)$ for some $q>n$, with estimate
\begin{equation}\label{estim g}
\norm{(L_u (-w))_+}_{L^q(B_1)} \leq C\left(\left(\sup_{B_1} w\right)^{1-2/\beta}+1\right),
\end{equation}
for some $C>0$ depending only on the quantities indicated in the statement of Theorem \ref{main thm}.
The conclusion will then follow from the uniform estimate of Theorem \ref{max pp} with $g=(L_u (-w))_+$.
Note that $w=0$ on $\partial B_1$ (in fact, this is the (only) obstruction to simply take $\eta=1$).

\item
\textbf{Computation of $L_u(-w)$.}

We have
$$w_i=\eta_i (T_\epsilon u)^{\alpha}+\eta \alpha (T_\epsilon u)^{\alpha-1} (T_\epsilon u)_i,$$
and
\begin{multline*}
w_{i \bar{j}}=
\eta_{i \bar{j}} (T_\epsilon u)^{\alpha}
+\eta_i \alpha (T_\epsilon u)^{\alpha-1}(T_\epsilon u)_{\bar{j}}
+\eta_{\bar{j}} \alpha (T_\epsilon u)^{\alpha-1} (T_\epsilon u)_i
\\
+\eta \alpha(\alpha-1) (T_\epsilon u)^{\alpha-2} (T_\epsilon u)_{\bar{j}} (T_\epsilon u)_i
+\eta \alpha (T_\epsilon u)^{\alpha-1} (T_\epsilon u)_{i \bar{j}}.
\end{multline*}

Consequently,
$$
L_u (-w)=
\sum_{1\leq i, j \leq n} -\frac{\partial G}{\partial a_{i \bar{j}}}(\DDC u) w_{i \bar{j}}=
I+II,
$$
where (using also the identity $\overline{v_i}=v_{\bar{i}}$ for any real-valued function $v$)
\begin{multline*}
I=
\sum_{1\leq i, j \leq n} -\frac{\partial G}{\partial a_{i \bar{j}}}(\DDC u) \eta_{i \bar{j}} (T_\epsilon u)^{\alpha}
+2 \alpha (T_\epsilon u)^{\alpha-1} \real \left(\sum_{1\leq i, j \leq n} -\frac{\partial G}{\partial a_{i \bar{j}}}(\DDC u)  \eta_i (T_\epsilon u)_{\bar{j}}\right)
\\
+\sum_{1\leq i, j \leq n} -\frac{\partial G}{\partial a_{i \bar{j}}}(\DDC u) \eta \alpha(\alpha-1) (T_\epsilon u)^{\alpha-2} (T_\epsilon u)_{\bar{j}} (T_\epsilon u)_i,
\end{multline*}
and
$$II=\sum_{1\leq i, j \leq n} -\frac{\partial G}{\partial a_{i \bar{j}}}(\DDC u) \eta \alpha (T_\epsilon u)^{\alpha-1} (T_\epsilon u)_{i \bar{j}}.$$

\item
\textbf{Estimate of the term $I$.}

To estimate this term we will make use of the ellipticity and of the estimate \eqref{estim DG}.
Let us introduce the following sesquilinear form on $\C^n$:
$$\varphi(a,b)=\sum_{1\leq i, j \leq n} -\frac{\partial G}{\partial a_{i \bar{j}}}(\DDC u) a_i \conj{b_j}, \quad a,b \in \C^n.$$

This form is nonpositive by ellipticity of $G$.
Therefore, we can use the basic inequality $2 \real \varphi(a,b) \leq -\varphi(a,a) -\varphi(b,b)$ with $a_i=\frac{1}{\sqrt{t}}\eta_i$ and $b_j=\sqrt{t}(T_\epsilon u)_j$ ($t>0$ to be chosen below), to obtain
\begin{multline*}
2\real \left(\sum_{1\leq i, j \leq n} -\frac{\partial G}{\partial a_{i \bar{j}}}(\DDC u)  \eta_i (T_\epsilon u)_{\bar{j}}\right)
\leq 
-\frac{1}{t}\sum_{1\leq i, j \leq n} -\frac{\partial G}{\partial a_{i \bar{j}}}(\DDC u) \eta_i \eta_{\bar{j}}
\\
-
t\sum_{1\leq i, j \leq n} -\frac{\partial G}{\partial a_{i \bar{j}}}(\DDC u) (T_\epsilon u)_i (T_\epsilon u)_{\bar{j}}.
\end{multline*}

Taking now $t=(\alpha-1)\eta/T_\epsilon u>0$, this removes the third term in $I$ to give
$$
I \leq 
(T_\epsilon u)^{\alpha}
\sum_{1\leq i, j \leq n} -\frac{\partial G}{\partial a_{i \bar{j}}}(\DDC u) \left(
\eta_{i \bar{j}}
-\frac{\alpha}{(\alpha-1) \eta} \eta_i \eta_{\bar{j}}
\right).
$$

Let
$$
B=(b_{i\bar{j}})_{1 \leq i,j \leq n},
\quad
b_{i \bar{j}}=
-\eta_{i \bar{j}}
+\frac{\alpha}{(\alpha-1) \eta} \eta_i \eta_{\bar{j}}.
$$
Direct computations show that
$$\eta_i=-\beta \bar{z}_i \eta^{1-1/\beta},
\quad
\eta_{i \bar{j}}=-\beta \delta_{i j} \eta^{1-1/\beta}
+\beta(\beta-1)\bar{z}_i z_j \eta^{1-2/\beta},
$$
where $\delta_{ij}$ denotes the Kronecker delta.
Since $\eta \leq 1$, we obtain
$$
\norm{B}_{\Herm^n} \leq C(\alpha,\beta,n)\eta^{1-2/\beta}
=C(\alpha,\beta,n) w^{1-2/\beta} \frac{1}{(T_\epsilon u)^{\alpha(1-2/\beta)}}.
$$
%with $C(\alpha,\beta,n)=n\left(\beta+\beta(\beta-1)+\frac{\alpha}{\alpha-1}\beta^2\right)$.

Consequently, using \eqref{estim DG}, we have
$$
I
\leq
C(\alpha,\beta,n)
C'(f,\degr)
\norm{\DDC u}^{\degr-1}_{\Herm^n} (T_\epsilon u)^{2\alpha/\beta}
w^{1-2/\beta},
$$
where $C'(f,\degr)=(\inf_{B_2} f)^{(1/\degr)-1}$ if $\degr \geq 1$ and $C'(f,\degr)=(\sup_{B_2} f)^{(1/\degr)-1}$ if $\degr<1$.
Note that we cannot use the better estimate \eqref{ineq Garding F} since $B \not\in \clos{\matcone_n}$.

\item
\textbf{Estimate of the term $II$.}

For this term, we will use the concavity of $F$ to show that
$$
II \leq  \eta \alpha \abs{T_{\epsilon} (f^{1/\degr})} (T_\epsilon u)^{\alpha-1}.
$$

To prove such an estimate, we would like to use the concavity inequality
\begin{equation}\label{cvx ineq}
G(B) \leq G(A)+DG(A)(B-A), \quad A,B \in \matcone,
\end{equation}
with
$$A=(\DDC u)(z), \quad B=(\DDC u_{\epsilon})(z).$$

But first observe that $B=(\DDC u)_\epsilon(z)$ (see Lemma \ref{lem prop approxi}) and apply Jensen's inequality (Lemma \ref{lem Jensen}) to see that indeed $B \in \matcone$  for a.e. $z \in B_1$ with, in addition,
$$G((\DDC u)_{\epsilon})
\geq \left(G(\DDC u)\right)_{\epsilon}.$$

We can now use \eqref{cvx ineq} to obtain the desired estimate:
$$
\sum_{1\leq i, j \leq n} -\frac{\partial G}{\partial a_{i \bar{j}}}(\DDC u) (T_\epsilon u)_{i \bar{j}}
\leq \frac{n+1}{\epsilon^2}\left(G(\DDC u)-\left(G(\DDC u)\right)_{\epsilon}\right)=-T_\epsilon (f^{1/\degr}).
$$

\item
\textbf{Choices of $\alpha$ and $\beta$.}

In summary, we have obtain the inequality
$$
(L_u (-w))_+ \leq C(\alpha,\beta,n)C'(f,\degr)
\norm{\DDC u}^{\degr-1}_{\Herm^n} (T_\epsilon u)^{2\alpha/\beta}
w^{1-2/\beta}
+\eta \alpha \abs{T_\epsilon (f^{1/\degr})} (T_\epsilon u)^{\alpha-1}.
$$

Thanks to the condition \eqref{cond p} on $p$, we can find $q>n$ such that $p/q \geq 1$ and $(p/q)-(\degr-1)>0$.
Consequently, taking
$$
\alpha=1+\frac{p}{q},
\quad
\beta=\frac{2 \alpha}{\frac{p}{q}-(\degr-1)},
$$
we have $\alpha,\beta \geq 2$ and $(L_u (-w))_+ \in L^{q}(B_1)$, with the following estimate (by H{\"o}lder's inequality and \ref{UB lem} of Lemma \ref{lem prop approxi}):
\begin{multline*}
\norm{(L_u (-w))_+}_{L^{q}(B_1)}
\leq C(\alpha,\beta,n)C'(f,\degr)C(n,p)
\norm{\DDC u}_{L^p(B_1)}^{\degr-1}
\norm{\Delta u}_{L^p(B_2)}^{2\alpha/\beta}
\sup_{B_1} w^{1-2/\beta}
\\
+\eta \alpha C(n,p) \norm{\Delta (f^{1/\degr})}_{L^{\infty}(B_2)} \norm{\Delta u}_{L^p(B_2)}^{p/q}.
\end{multline*}
This yields the desired estimate \eqref{estim g}.
\qed

%Using the Calder\'on-Zygmund estimate
%$$\norm{\DDC u}_{L^p(B_1)} \leq C\left(\norm{\Delta u}_{L^p(B_2)}+\norm{u}_{L^p(B_2)}\right),$$

\end{enumerate}

\subsection{$C^\infty$ regularity}

In this section, we show how to deduce Corollary \ref{cor main thm} from Theorem \ref{main thm} and the classical theory for uniformly elliptic real equations from \cite{CC95}.

We follow the presentations of \cite{Wan12-MRL,TWWY15}.
Let $\Sym^{2n}$ be the space of $2n \times 2n$ real symmetric matrices.
The inner product on $\Sym^{2n}$ is the standard Frobenius inner product.

In this section, a function $u$ of $n$ complex variables $(z_1,\ldots,z_n)$ will also be a considered as a function of $2n$ real variables $(x_1,\ldots, x_n, y_1,\ldots,x_n)$, where $z_j=x_j+iy_j$.
The real Hessian of $u$ is then denoted by
$$
\renewcommand\arraystretch{2}
\DDR u
=
\left(\begin{array}{c|c}
u_{x_i x_j} & u_{x_i y_j} \\
\hline
u_{y_i x_j} &u_{y_i y_j}
\end{array}\right)
\in \Sym^{2n}.
$$
The open ball in $\R^{2n}$ of center $0$ radius $r>0$ will be denoted by $B_r$.

Corollary \ref{cor main thm} is essentially a consequence of the following fundamental general result and Schauder's estimates:

\begin{theorem}\label{thm CC95}
Let $\bar{F}:\Sym^{2n} \longrightarrow \R$ be concave and uniformly elliptic, and let $f \in C^{0,\beta}(\clos{B_1})$ ($0<\beta \leq 1$).
If $u \in W^{2,p}_{\loc}(B_1)$ is a strong solution to
$$\bar{F}(\DDR u)=f \quad \mbox{ in } B_1,$$
and $p \geq 2n$, then there exist $\alpha \in (0,1)$ and $\epsilon \in (0,1)$ such that
$$u \in C^{2,\alpha}(B_\epsilon).$$
\end{theorem}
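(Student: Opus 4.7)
The plan is to exhibit Theorem~\ref{thm CC95} as a direct consequence of Caffarelli's viscosity-solution theory combined with the Evans--Krylov--Caffarelli $C^{2,\alpha}$ estimate for concave uniformly elliptic equations, both developed in~\cite{CC95}. My two-step strategy is to first transfer the problem into the viscosity-solution framework, and then invoke the interior $C^{2,\alpha}$ estimate for viscosity solutions.

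For the first step, since $p \geq 2n$ equals the ambient real dimension, the Sobolev embedding yields $u \in W^{1,\infty}_{\loc}(B_1)$, so in particular $u$ is continuous. A standard argument from the theory of $L^p$-viscosity solutions (in the spirit of the discussion in~\cite{CC95}) then ensures that any strong $W^{2,p}_{\loc}$-solution of a uniformly elliptic fully nonlinear equation is an $L^p$-viscosity solution whenever $p$ is at least the ambient dimension; combined with the continuity of $\bar{F}$ and of $f$, this upgrades $u$ to a continuous viscosity solution of $\bar{F}(\DDR u)=f$ in $B_1$.

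With that in hand, the second step is a direct application of the Schauder-type $C^{2,\alpha}$ estimate for concave uniformly elliptic equations from~\cite{CC95}: since $\bar{F}$ is concave and uniformly elliptic on $\Sym^{2n}$ and $f \in C^{0,\beta}(\clos{B_1})$, every viscosity solution of $\bar{F}(\DDR u)=f$ belongs to $C^{2,\alpha}_{\loc}(B_1)$ for some $\alpha \in (0,1)$ depending only on $\beta$, $n$ and the ellipticity constants of $\bar{F}$. Choosing any $\epsilon \in (0,1)$ with $B_\epsilon \subset\subset B_1$ then yields the conclusion. Since both steps are entirely classical, the only real work in a complete write-up is to match the conventions of~\cite{CC95} (concavity, ellipticity, viscosity sub/super-jets) with those of the present paper; I do not foresee any genuine technical obstacle beyond this bookkeeping.
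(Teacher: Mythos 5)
Your proposal is correct and follows essentially the same route as the paper: pass from a strong $W^{2,p}_{\loc}$ solution with $p\geq 2n$ to a viscosity solution (the paper cites \cite[Section III]{Lio83} for this, using $C$-viscosity solutions), then apply the interior $C^{2,\alpha}$ estimate of \cite[Theorem 8.1]{CC95} for concave uniformly elliptic equations. One tiny slip: at the borderline $p=2n$ the Sobolev embedding gives $u\in C^{0,\alpha}$ but not $W^{1,\infty}_{\loc}$; since only continuity of $u$ is needed, this does not affect the argument.
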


We recall that ``strong solution'' simply means that the equation holds almost everywhere.
This theorem is a consequence of \cite[Theorem 8.1]{CC95} (see e.g. \cite[Section 2]{Wan12-MRL} for details).
In this reference the result deals in fact with $C$-viscosity solutions, but we recall that a strong solution $u \in W^{2,p}_{\loc}$ with $p \geq 2n$ is a $C$-viscosity solution, see \cite[Section III]{Lio83}.

In order to apply this result we need two preliminary observations: firstly, we have to associate to our equation $F(\DDC u)=f$ an equation for the real Hessian $\bar{F}(\DDR u)=f$ and, secondly, this $\bar{F}$ has to be defined over the whole space $\Sym^{2n}$.
This is done in a standard way (see e.g. \cite{Blo99,Wan12-MRL,TWWY15}, etc.):

\begin{itemize}
\item
We identify $n\times n$ Hermitian matrices with the subspace of $\Sym^{2n}$ given by matrices invariant by the canonical complex structure:
$$
\iota(\Herm^n)
=\ens{A \in\Sym^{2n} \st AJ-JA=0},
\quad
J=
\begin{pmatrix}
0 & -\Id \\
\Id & 0
\end{pmatrix},
$$
where the map $\iota:\Herm^n \longrightarrow \Sym^{2n}$ is given by
$$
\iota(A+iB)=
\begin{pmatrix}
A & -B \\
B & A
\end{pmatrix}
.$$

Let us also introduce the projection $\pi: \Sym^{2n} \longrightarrow \iota(\Herm^n)$ given by
$$\pi(S)=\frac{S+J^{\tra} SJ}{2}.$$

The complex and real Hessians are then related by the identity
$$\iota(2D_{\C}^2 u)=\pi(D_{\R}^2u).$$

As a result, if $u$ solves $F(\DDC u)=f$, then it solves as well
$$\tilde{F}(\DDR u)=f,$$
where $\tilde{F}:\matconeR_R \longrightarrow \R$ is given by
$$\tilde{F}(A)=F\left(\frac{1}{2}\iota^{-1}\left(\pi(A)\right)\right),$$
and $\matconeR_R \subset \Sym^{2n}$ is defined by
$$\matconeR_R=\pi^{-1}\left(\iota\left(2\matcone_R\right)\right).$$ 
We recall that $\matcone_R$ is defined in \eqref{def CR} and that it is not a cone, so that the factor $2$ cannot be removed.

It is clear that $\matconeR_R$ is an open convex set and that $\tilde{F}$ is concave in $\matconeR_R$.
Since $F$ is uniformly elliptic in $\matcone_R$ by assumption of Corollary \ref{cor main thm}, it is also clear that $\tilde{F}$ is uniformly elliptic in $\matconeR_R$.

\item
Let us now extend $\tilde{F}$ to $\Sym^{2n}$.
This is done by considering $\bar{F}:\Sym^{2n} \longrightarrow \R$ defined by
$$\bar{F}(A)=\inf\ens{L(A) \st L:\Sym^{2n} \longrightarrow \R \text{ affine linear, } \quad m\Id \leq DL \leq M \Id, \quad L \geq \tilde{F} \text{ in } \matconeR_R},$$
where $0<m \leq M$ denote the ellipticity constants of $\tilde{F}$.
We can check that $\bar{F}$ is concave and uniformly elliptic in $\Sym^{2n}$, and that we have $\bar{F}=\tilde{F}$ in $\matconeR_R$ (see e.g. \cite[Lemma 4.1]{TWWY15}).
\end{itemize}

Corollary \ref{cor main thm} is now an easy consequence of the previous results.

\begin{proof}[Proof of Corollary \ref{cor main thm}]

After translation and dilation of the coordinates if necessary, it is sufficient to show that $u \in C^{\infty}(B_\epsilon)$ for some $\epsilon \in (0,1)$ when $B_1 \subset\subset B_2 \subset\subset \Omega$.

From Theorem \ref{main thm} we know that, for some $R_1>0$,
$$\Delta u <R_1 \quad \text{ in } B_2.$$ 
On the other hand, by our assumptions on $f$, there exists $R_2>0$ such that
$$F(\DDC u)=f>\frac{1}{R_2}.$$
Consequently, for any $R>\max\ens{R_0,R_1,R_2}$ we have
$$\DDC u \in \matcone_R.$$
It now follows from the previous discussion that $u$ is a strong solution to the concave and uniformly elliptic real equation
$$\bar{F}(\DDR u)=f \quad \mbox{ in } B_1.$$
Besides, $u \in W^{2,p}(B_1)$ for every $1 \leq p<\infty$ by Calder\'on-Zygmund estimates since $u, \Delta u \in L^{\infty}(B_2)$.
We can then apply Theorem \ref{thm CC95} and obtain that $u \in C^{2,\alpha}(B_\epsilon)$ for some $\alpha \in (0,1)$ and $\epsilon \in (0,1)$.

As a result, $u \in C^{2,\alpha}(B_\epsilon)$ solves $\tilde{F}(\DDR u)=f$ in $B_\epsilon$.
Since
$$F \in C^{\infty}(\matcone) \quad \Longrightarrow \quad \tilde{F} \in C^{\infty}(\matconeR_R),$$
it follows that $u \in C^{\infty}(B_\epsilon)$ from the classical Schauder's estimates, see e.g. \cite[Proposition 9.1]{CC95} (the proof in this reference is carried out for $\matconeR_R=\Sym^{2n}$ but all the arguments go through after noticing that $\ens{tD^2 u(x+he_k)+(1-t) D^2 u(x) \st t \in [0,1], \, x \in \clos{H}, \, 0\leq h \leq \dist(H,\partial\Omega)/2}$ is a compact set of $\Sym^{2n}$ which is included in $\matconeR_R$).

\end{proof}

\section{Application to Hessian equations}\label{sect examples}

Let us now present some examples covered by our framework.
We emphasize that our results Theorem \ref{main thm} and Corollary \ref{cor main thm} do not require that our equations are Hessian, but all the examples of application that we present here will be Hessian equations.

Let us first recall some notations.

\begin{itemize}
\item
For $A \in \Herm^n$, its eigenvalues (which are real) will always be sorted as follows:
$$\lambda_1(A) \leq \lambda_2(A) \leq \cdots \leq \lambda_n(A).$$
We then introduce $\lambda:\Herm^n \longrightarrow \R^n$ defined by
$$\lambda(A)=(\lambda_1(A),\ldots,\lambda_n(A)).$$

\item
A function $F:\matcone \longrightarrow \R$ is said to be a Hessian operator if there exist a set $\veccone \subset \R^n$ and a function $\hat{F}:\veccone \longrightarrow \R$ such that
$$\matcone=\lambda^{-1}(\veccone), \quad F(A)=\hat{F}(\lambda(A)), \quad \forall A \in \matcone.$$
The notation $\veccone$ will always be saved for sets of $\R^n$ whereas the notation $\matcone$ will always be saved for sets of $\Herm^n$.

\item
For $k \in \ens{1,\ldots,n}$, we introduce the classical cones
$$
\veccone_k=\ens{\lambda \in \R^n \st \sigma_\ell(\lambda_1,\ldots,\lambda_n)>0, \quad \forall \ell \in \ens{1,\ldots,k}},
$$
where $\sigma_k$ is the $k$-th elementary symmetric polynomial:
$$
\sigma_k(\lambda)=
\sum_{(i_1,\ldots,i_k) \in E_n^k} \lambda_{i_1} \cdots \lambda_{i_k},
$$
where, here and in the rest of this article, we use the notation
$$E_n^k=\ens{(i_1,\ldots,i_k) \in \ens{1,\ldots,n}^k \st i_1<\ldots<i_k}.$$
We recall that $\card E_n^k=C_n^k=n!/(k!(n-k)!)$.

\item
We recall that a set $S \subset \R^n$ is symmetric if $(\lambda_{\tau(1)},\ldots,\lambda_{\tau(n)}) \in S$ for every $(\lambda_1,\ldots,\lambda_n) \in S$ and every permutation $\tau$.
A function $\hat{F}:S \longrightarrow \R$ is symmetric if $\hat{F}(\lambda_{\tau(1)},\ldots,\lambda_{\tau(n)})=\hat{F}(\lambda_1,\ldots,\lambda_n)$ for every permutation $\tau$.

\end{itemize}

Let us now provide practical conditions on $\veccone$ and $\hat{F}$ to guarantee that $\matcone=\lambda^{-1}(\veccone)$ and $F=\hat{F} \circ \lambda$ satisfy the assumptions of our main results.

\begin{proposition}\label{prop hatF}
Let $\hat{F} \in C^{\infty}(\R^n,\R)$ be a symmetric polynomial, homogeneous of degree $\degr \in \N^*$, satisfying, for some $C>0$,
\begin{equation}\label{hyp comp with MA hatF}
\hat{F}(\lambda)^{1/\degr} \geq C \left(\prod_{i=1}^n \lambda_i\right)^{1/n}, \quad \forall \lambda \in \veccone_n,
\end{equation}
and let $\veccone \subset \R^n$ be a nonempty open symmetric convex cone such that
\begin{gather*}
\veccone_n \subset \veccone \subset \veccone_1, \quad \partial \veccone \cap \partial \veccone_1=\ens{0}, \\
\hat{F}>0 \text{ in } \veccone, \quad \hat{F}=0 \text{ on } \partial\veccone, \\
\hat{F}^{1/\degr} \text{ is concave in } \veccone.
\end{gather*}

Then, $\matcone=\lambda^{-1}(\veccone)$ satisfies \ref{domain} and $F=\hat{F} \circ \lambda$ satisfies \ref{reg f}, \ref{pos f}, \ref{F hom}, \ref{hyp cvx}, \ref{hyp comp with MA} and \ref{prop for UE}.
\end{proposition}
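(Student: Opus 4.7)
My plan is to verify items \ref{domain}--\ref{hyp comp with MA} by routine translation through the eigenvalue map $\lambda$, and to concentrate the substantive work on the uniform ellipticity property \ref{prop for UE}. For \ref{domain}, openness and the cone property of $\matcone = \lambda^{-1}(\veccone)$ follow from continuity and positive homogeneity of $\lambda$, while convexity (and later \ref{hyp cvx}) comes from the classical transfer that a symmetric convex (resp.\ concave) object on $\R^n$ induces a convex (resp.\ concave) object on $\Herm^n$ through $\lambda$; the inclusions $\matcone_n\subset\matcone\subset\matcone_1$ mirror $\veccone_n\subset\veccone\subset\veccone_1$ term by term. For \ref{reg f}, since a symmetric polynomial in $\lambda$ is a polynomial in the elementary symmetric polynomials, and these are polynomials in the entries of $A$ (up to signs, the coefficients of the characteristic polynomial), $F\in C^\infty(\Herm^n)$. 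Properties \ref{pos f}, \ref{F hom} and \ref{hyp comp with MA} are direct pointwise translations of the corresponding hypotheses on $\hat F$.

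For \ref{prop for UE}, I would first fix the geometry of $\matcone_R$. It is open and convex as the intersection of $\matcone$, $\{\Trace(A)<R\}$ and the superlevel set $\{F^{1/\degr}>R^{-1/\degr}\}$ (convex by \ref{hyp cvx}). For boundedness, take $A_k\in\matcone_R$ with $\norm{A_k}_{\Herm^n}\to\infty$ and pass to a unit-norm limit $M$ of $A_k/\norm{A_k}_{\Herm^n}$; the trace bound gives $\Trace(M)\leq 0$, while $M\in\clos{\matcone}\subset\clos{\matcone_1}$ gives $\Trace(M)\geq 0$, so $\lambda(M)\in\clos{\veccone}\cap\partial\veccone_1$. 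Since $\veccone\subset\veccone_1$ is open, $\lambda(M)\notin\veccone$, hence $\lambda(M)\in\partial\veccone\cap\partial\veccone_1=\ens{0}$, contradicting $\norm{M}_{\Herm^n}=1$. The inclusion $\clos{\matcone_R}\subset\matcone$ follows because any $A\in\clos{\matcone_R}$ satisfies $F(A)\geq 1/R>0$ while $\hat F|_{\partial\veccone}=0$. Thus $\clos{\matcone_R}$ is compact in $\matcone$, on which $F$ is smooth, and so $DF$ is bounded above, giving the upper ellipticity constant. For the lower one, the unitary invariance of $F$ shows that the eigenvalues of $DF(A)$ are $\ens{\partial_i\hat F(\lambda(A))}_{i=1}^n$; by compactness of $\lambda(\clos{\matcone_R})$ in $\veccone$, it therefore suffices to prove the strict positivity $\partial_i\hat F>0$ on all of $\veccone$.

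This strict positivity is the step I expect to be the main obstacle, since it is not a formal consequence of ellipticity and concavity alone. I would argue by contradiction. Suppose $\partial_{i^*}\hat F(\lambda^*)=0$ for some $\lambda^*\in\veccone$ and index $i^*$. Since $e_{i^*}\in\clos{\veccone_n}\subset\clos{\veccone}$ and $\veccone$ is an open convex cone (hence an intersection of open half-spaces whose defining linear forms are non-negative on $e_{i^*}$), the ray $\lambda^*+t e_{i^*}$, $t\geq 0$, lies entirely in $\veccone$. Along this ray, $\partial_{i^*}\hat F^{1/\degr}$ is non-negative by ellipticity (a consequence of \ref{hyp cvx}, \ref{F hom}, \ref{pos f}, as per the remark after \eqref{ineq Garding F}) and non-increasing in $t$ by concavity of $\hat F^{1/\degr}$; starting from $0$ at $t=0$ it vanishes identically for $t\geq 0$, so $\hat F$ is constant along the forward ray. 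Because $t\mapsto\hat F(\lambda^*+t e_{i^*})$ is a polynomial, this constancy extends to every $t\in\R$. However, $\sigma_1(\lambda^*-t e_{i^*})=\sigma_1(\lambda^*)-t$ forces the line to exit $\veccone_1\supset\veccone$ at some $T\in(0,\sigma_1(\lambda^*)]$ with $\lambda^*-T e_{i^*}\in\partial\veccone$; by continuity and $\hat F|_{\partial\veccone}=0$, this yields $\hat F(\lambda^*)=\hat F(\lambda^*-T e_{i^*})=0$, contradicting $\lambda^*\in\veccone$. The essential ingredients here are the polynomial nature of $\hat F$ (to propagate constancy from a forward ray to the full real line) and the confinement $\veccone\subset\veccone_1$ (to guarantee that the line actually leaves $\veccone$ in finite time).
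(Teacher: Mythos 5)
Your proposal is correct, and on the two genuinely delicate points it coincides with the paper's own argument: the boundedness of $\matcone_R$ is extracted from $\partial\veccone\cap\partial\veccone_1=\ens{0}$ by the same normalization-and-limit contradiction, and the strict positivity of $\partial\hat{F}/\partial\lambda_i$ on $\veccone$ is obtained the same way (nonnegativity from concavity plus degree-one homogeneity of $\hat{F}^{1/\degr}$; a vanishing derivative propagates along the ray $\lambda^*+te_i$, extends to the whole line by polynomiality/analyticity, and contradicts $\hat{F}=0$ at the exit point of the line from $\veccone$, which exists because $\veccone\subset\veccone_1$). The differences are in the surrounding steps. For the convexity of $\matcone$ and the concavity of $F^{1/\degr}$ you invoke the classical transfer theorem as a black box, whereas the paper gives a self-contained proof via majorization ($t\lambda(A)+(1-t)\lambda(B)$ majorizes $\lambda(tA+(1-t)B)$ because $A\mapsto\sum_{j\leq k}\lambda_j(A)$ is concave, and a symmetric concave function is Schur-concave); your shortcut is legitimate but needs a citation. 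More interestingly, you finish the uniform ellipticity differently: you pass to the matrix derivative via the spectral formula $DF(A)=U\,\diag\!\left(\partial_i\hat{F}(\lambda(A))\right)U^*$, get $m\Id\leq DF\leq M\Id$ on the compact $\clos{\matcone_R}$, and integrate along the segment $[A,B]$, which stays in the convex set $\matcone_R$; the paper instead stays entirely at the level of $\hat{F}$ and $\veccone_R$, joining $\lambda(A)$ to $\lambda(B)$ one coordinate at a time, which forces it to first assume \eqref{easy case} and then invoke an approximation argument. Your route is cleaner and dispenses with that approximation, but it leans on the derivative formula for spectral functions, which is precisely the kind of ``routine translation through $\lambda$'' that is not routine at matrices with repeated eigenvalues; it is true for symmetric $C^1$ functions (Lewis's theorem on derivatives of spectral functions) and should be stated and cited as such rather than attributed to unitary invariance alone. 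Two minor polish points: the forward ray $\lambda^*+te_{i^*}$, $t\geq 0$, does stay in $\veccone$, but not because an open convex cone is an intersection of open half-spaces (false for non-polyhedral cones) — the correct statement is $\veccone+\clos{\veccone}\subset\veccone$, or one can simply use a small interval $[0,\epsilon]$ as the paper does, which already suffices to launch the polynomial extension; and the final segment-integration step deserves one explicit line, namely $m\Trace(B-A)\leq F(B)-F(A)\leq M\Trace(B-A)$ together with the equivalence of $\Trace(P)$ and $\norm{P}_{\Herm^n}$ for $P\geq 0$.
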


We emphasize that the map $\lambda$ has few nice properties, so this result is far from being trivial.
%Note as well that we do not assume that $\hat{F}$ is a polynomial.
The proof of Proposition \ref{prop hatF} is postponed to the end of this section for the sake of the presentation.

\begin{remark}
The concavity assumption is satisfied by a large class of polynomials, namely, the hyperbolic polynomials.
We recall that a polynomial $\hat{F}$ homogeneous of degree $\degr \in \N^*$ is called hyperbolic with respect to $e \in \R^n$ if $\hat{F}(e) \neq 0$ and if, for every $\lambda \in \R^n$, the one-variable polynomial $t \in \C \longmapsto \hat{F}(\lambda-t e)$ has only real roots.
In this case, $\hat{F}^{1/\degr}$ is concave in the open convex cone (called hyperbolicity cone)
$$\Gamma=\ens{\lambda \in \R^n \st \hat{F}(\lambda-te)\neq 0, \quad \forall t \leq 0}.$$
We refer to \cite[Section 1]{CNS85} and \cite{Gar59} for more details.
\end{remark}

Let us now finally present some concrete examples.

\begin{itemize}
\item
\textbf{The complex Monge-Ampère equation:}
$$
\hat{F}(\lambda)=\prod_{i=1}^n \lambda_i,
\quad
\veccone=\veccone_n.
$$
The degree of homogeneity is obviously $\degr=n$ and the comparison property \eqref{hyp comp with MA hatF} is trivial.
This polynomial is hyperbolic with respect to $e=(1,1,\ldots,1)$.
We can check that the associated hyperbolicity cone is indeed $\veccone_n$.
It clearly satisfies all the desired assumptions.

\item
\textbf{The complex $k$-Hessian equation:} for $k \in \ens{1,\ldots,n}$,
$$
\hat{F}=\sigma_k,
\quad
\veccone=\veccone_k.
$$
The degree of homogeneity is obviously $\degr=k$ and the comparison property \eqref{hyp comp with MA hatF} follows from Maclaurin's inequality: for any $k \geq 2$,
$$
\veccone_{k-1} \subset \veccone_k,
\quad
\left(\frac{\sigma_k}{C_n^k}\right)^{1/k} \leq \left(\frac{\sigma_{k-1}}{C_n^{k-1}}\right)^{1/(k-1)} \mbox{ in } \veccone_{k-1}.
$$
This polynomial is hyperbolic with respect to $e=(1,1,\ldots,1)$.
We can check that the associated hyperbolicity cone is indeed $\veccone_k$.
It clearly satisfies all the desired assumptions ($k>1$).

\item
For $s \in [0,1)$, let
\begin{gather*}
\begin{array}{rl}
\hat{F}(\lambda) &=(1-s)^2\lambda_1 \lambda_2+s(\lambda_1+\lambda_2)^2
\\
&=(\lambda_1+s\lambda_2)(s\lambda_1+\lambda_2),
\end{array}
\\
\veccone=\Gamma_{2-s}=\ens{\lambda \in \R^2 \st \lambda_1+s\lambda_2>0, \quad s\lambda_1+\lambda_2>0}.
\end{gather*}
The cones $\Gamma_{2-s}$ interpolate between $\Gamma_2$ and $\Gamma_1$.
The degree of homogeneity is obviously $\degr=2$ and the comparison property \eqref{hyp comp with MA hatF} is trivial since $\hat{F}(\lambda) \geq (1-s)^2\lambda_1 \lambda_2$.
The cone clearly satisfies all the desired assumptions ($\hat{F}$ is also a hyperbolic polynomial with respect to $e=(1,1)$ but the concavity of $\hat{F}^{1/2}$ is easily checked by a direct computation here).

\item\label{ex Mk}
\textbf{The complex $k$-Monge-Ampère equation:} for $k \in \ens{1,\ldots,n}$,
$$
\hat{F}=\MA_k,
\quad
\veccone=\veccone_k'.
$$
where
\begin{gather*}
\MA_k(\lambda)=
\prod_{(i_1,\ldots,i_k) \in E_n^k} \left(\lambda_{i_1}+\ldots+\lambda_{i_k}\right),
\\
\veccone_k'=
\ens{\lambda \in \R^n \st \lambda_{i_1}+\ldots+\lambda_{i_k}>0, \quad \forall (i_1,\ldots,i_k) \in E_n^k}.
\end{gather*}

The degree of homogeneity is $\degr=C_n^k$.
The comparison property \eqref{hyp comp with MA hatF} is a consequence of the following inequality: for any $k \geq 2$,
\begin{equation}\label{ineq MAk}
\veccone_{k-1}' \subset \veccone_k',
\quad
\frac{1}{k}\left(\MA_k\right)^{1/C_n^k} \geq \frac{1}{k-1}\left(\MA_{k-1}\right)^{1/C_n^{k-1}} \mbox{ in } \veccone_{k-1}'.
\end{equation}
This property will be detailed in Section \ref{sect ineq MAk} below.
This polynomial is again hyperbolic with respect to $e=(1,1,\ldots,1)$.
We can check that the associated hyperbolicity cone is indeed $\veccone_k'$.
It clearly satisfies all the desired assumptions ($k<n$).

\end{itemize}

We conclude this section with the proof Proposition \ref{prop hatF}.

\begin{proof}[Proof of Proposition \ref{prop hatF}]
We only establish the non obvious properties.
\begin{enumerate}[1)]
\item
\textbf{Regularity of $F$.}
Since $\hat{F}$ is a symmetric polynomial, by Newton's fundamental theorem of symmetric polynomials we can write
$$\hat{F}(\lambda)=p(\sigma_1(\lambda),\ldots,\sigma_n(\lambda)),$$
for some polynomial $p$.
Since all the functions $A \in \Herm^n \longmapsto \sigma_k(\lambda(A))$ are $C^{\infty}(\Herm^n,\R)$ (recall that they are equal to $(-1)^k (\partial^{n-k} h/\partial t^{n-k})(0,A)$ where $h \in C^{\infty}(\R \times \Herm^n,\R)$ is the function $h(t,A)=\det(t\Id-A)$), we deduce that $F \in C^{\infty}(\Herm^n,\R)$ by composition.

\item
\textbf{Convexity of $\matcone$ and concavity of $F^{1/\degr}$.}
We point out that the concavity of $F^{1/\degr}$ is also shown in \cite[Section 3]{CNS85} but we would like to present a different proof here.
Let $A,B \in \matcone$ and $t \in [0,1]$ be fixed.
We have to show that
\begin{gather*}
\lambda(tA+(1-t)B) \in \veccone, \\
F(\lambda(tA+(1-t)B))^{1/\degr} \geq tF(\lambda(A))^{1/\degr}+(1-t)F(\lambda(B))^{1/\degr}.
\end{gather*}
To this end, it is convenient to make use of the theory of majorization, for which we refer to \cite{MOA11}.
The starting point is that, for every $k \in \ens{1,\ldots,n}$, the function
$$
A \in \Herm^n \longmapsto \lambda_1(A)+\ldots+\lambda_k(A) \text{ is concave.}
$$
From this property, we have that $t\lambda(A)+(1-t)\lambda(B)$ ``majorizes'' $\lambda(tA+(1-t)B)$, that is
\begin{equation}\label{majorization}
\begin{dcases}
\sum_{j=1}^k \lambda_j(tA+(1-t)B)
\geq \sum_{j=1}^k \left(t\lambda_j(A)+(1-t)\lambda_j(B)\right), \quad \forall k \in \ens{1,\ldots,n-1},
\\
\sum_{j=1}^n \lambda_j(tA+(1-t)B)
=\Trace(tA+(1-t)B)
=\sum_{j=1}^n \left(t\lambda_j(A)+(1-t)\lambda_j(B)\right).
\end{dcases}
\end{equation}
It follows that (see e.g. \cite[Chapter 1, B.1. Lemma]{MOA11})
$$\lambda(tA+(1-t)B)^\tra
=T_N T_{N-1}\cdots T_1 (t\lambda(A)+(1-t)\lambda(B))^\tra,$$
for some matrices $T_1, \ldots, T_N \in \R^{n \times n}$ of the form
$$T_i=t_i \Id+ (1-t_i)P_i,$$
for some $t_i \in [0,1]$ and some elementary permutation matrix $P_i \in \R^{n \times n}$ (i.e. switching only two components).
Since $\veccone$ is convex and symmetric, it is clear that such transformations map $\veccone$ into itself.
As a result, we have $\lambda(tA+(1-t)B) \in \veccone$.

On the other hand, since $\hat{F}^{1/\degr}$ is concave and symmetric, it is Schur-concave (see e.g. \cite[Chapter 3, C.2. Proposition]{MOA11}), meaning that the majorization \eqref{majorization} implies that
$$\hat{F}(t\lambda(A)+(1-t)\lambda(B))^{1/\degr} \leq \hat{F}(\lambda(tA+(1-t)B))^{1/\degr}.$$
Using again the concavity of $\hat{F}^{1/\degr}$, we obtain the concavity of $F^{1/\degr}$.

\item
\textbf{Uniform ellipticity of $F$ in $\matcone_R$.}
Let $R>0$ be fixed.
Let
$$
\veccone_R=\ens{\lambda \in \veccone \st  \sum_{i=1}^n \lambda_i<R, \quad
\frac{1}{R}<\hat{F}(\lambda)}.
$$
We first show that the assumption $\partial \veccone \cap \partial \veccone_1=\ens{0}$ implies that $\veccone_R$ is bounded.
To this end, it is sufficient to prove that there exists $\epsilon>0$ such that
$$\veccone \subset \ens{\lambda \in \veccone \st  \epsilon \abs{\lambda}<\sum_{i=1}^n \lambda_i}.$$
To show this property, we argue by contradiction and assume that there exists a sequence $(\lambda^\epsilon)_{\epsilon>0} \subset \veccone$ such that
\begin{equation}\label{contra}
\epsilon \abs{\lambda^\epsilon} \geq \sum_{i=1}^n \lambda^\epsilon_i.
\end{equation}
Since $\veccone \subset \veccone_1$, we have in particular that $\lambda^\epsilon \neq 0$ and we can normalize the sequence by considering $\tilde{\lambda}^\epsilon=\lambda^\epsilon/\abs{\lambda^\epsilon}$.
We have $\tilde{\lambda}^\epsilon \in \veccone$ since $\veccone$ is a cone.
We can extract a subsequence, still denoted by $(\tilde{\lambda}^\epsilon)_{\epsilon>0}$, that converges to some $\tilde{\lambda} \in \clos{\veccone}$.
Besides, $\tilde{\lambda} \neq 0$ since $\abs{\tilde{\lambda}}=1$.
Passing to the limit $\epsilon \to 0$ in \eqref{contra}, we obtain that
$$0 \geq \sum_{i=1}^n \tilde{\lambda}_i.$$
Since $\veccone \subset \veccone_1$, it follows that $\tilde{\lambda} \in \partial \veccone \cap \partial \veccone_1$ and thus $\tilde{\lambda}=0$ by assumption, a contradiction.

As a result, $\clos{\veccone_R}$ is compact and there exist real numbers $m \leq M$ such that, for every $i \in \ens{1,\ldots,n}$,
\begin{equation}\label{UE hatF}
m \leq \frac{\partial \hat{F}}{\partial \lambda_i}(\lambda) \leq M, \quad \forall \lambda \in \clos{\veccone_R}.
\end{equation}

Let us now prove that $m>0$.
To this end, it is sufficient to show that
\begin{gather*}
0<\frac{\partial \hat{F}}{\partial \lambda_i}(\lambda), \quad \forall \lambda \in \veccone, \quad \forall i \in \ens{1,\ldots,n},
\\
\clos{\veccone_R} \subset \veccone.
\end{gather*}

The inclusion $\clos{\veccone_R} \subset \veccone$ immediately follows from the assumption that $\hat{F}=0$ on $\partial\veccone$.

Let us now prove the inequality.
We use some ideas of the proof of \cite[Corollary, pp. 269-270]{CNS85}.
Let $i \in \ens{1,\ldots,n}$ be fixed.
Let $e_i$ denote the $i$-th canonical vector of $\R^n$.
We have $e_i \in \clos{\veccone_n} \subset \clos{\veccone}$.
From the concavity and homogeneity of $\hat{G}=\hat{F}^{1/\degr}$, which belongs to $C^1(\veccone) \cap C^0(\clos{\veccone})$ since $\hat{F}>0$ in $\veccone$, we have
$$0 \leq \hat{G}(e_i)
\leq \hat{G}(\lambda)+D\hat{G}(\lambda)(e_i-\lambda)
=D\hat{G}(\lambda)e_i
=\frac{\partial \hat{G}}{\partial \lambda_i}(\lambda).
$$
Let us now prove that this inequality is strict.
Assume not, and let then $\lambda^* \in \veccone$ be such that $(\partial \hat{G}/ \partial \lambda_i)(\lambda^*)=0$.
Since $\veccone$ is open, there exists $\epsilon>0$ such that $\lambda^*+t e_i \in \veccone$ for every $t \in [0,\epsilon]$.
By concavity, we then have
$$
\left(D\hat{G}(\lambda^*+te_i)-D\hat{G}(\lambda^*)\right)(te_i) \leq 0, \quad \forall t \in [0,\epsilon].
$$
As a result,
$$\frac{\partial \hat{G}}{\partial \lambda_i}(\lambda^*+te_i)=0, \quad \forall t \in [0,\epsilon].$$
Since $\hat{F}>0$ in $\veccone$, this is equivalent to
$$\frac{\partial \hat{F}}{\partial \lambda_i}(\lambda^*+te_i)=0, \quad \forall  t \in [0,\epsilon].$$
By analyticity, we deduce that this identity holds for any $t \in \R$, in particular for negative ones.
Let then
$$T(\lambda^*)=\inf\ens{t \in \R \st \lambda^*+te_i \in \veccone}.$$
Since $\veccone \subset \veccone_1$, we necessarily have $T(\lambda^*)>-\infty$.
Integrating the identity $(\partial \hat{F}/\partial \lambda_i)(\lambda^*+te_i)=0$ over $[T(\lambda^*),0]$ then yields
$$\hat{F}(\lambda^*)=\hat{F}(\lambda^*+T(\lambda^*)e_i).$$
This is a contradiction since $\hat{F}>0$ in $\veccone \ni \lambda^*$, whereas $\hat{F}=0$ on $\partial\veccone \ni \lambda^*+T(\lambda^*)e_i$.

Therefore, \eqref{UE hatF} holds with $m>0$.
To conclude, it remains to show that this implies the uniform ellipticity of $F=\hat{F} \circ \lambda$ in $\matcone_R$.
Let then $A,B \in \matcone_R$ with $A \leq B$.
We denote the eigenvalues of $A$ (resp. $B$) by $(\lambda_1,\ldots,\lambda_n)$ (resp. $(\mu_1,\ldots,\mu_n)$).
By definition, we have to show that
$$m'\norm{B-A}_{\Herm^n} \leq F(B)-F(A) \leq M'\norm{B-A}_{\Herm^n},$$
for some $0<m' \leq M'$ (that do not depend on $A,B$).
We only prove the first inequality, the other one being proved similarly.
Since $B-A \geq 0$, the first inequality is equivalent to
$$m''\Trace(B-A) \leq F(B)-F(A),$$
for some $0<m''$.
Since $\Trace(B-A)=\Trace(B)-\Trace(A)$, this is equivalent to the following property for $\hat{F}$:
\begin{equation}\label{caract UE hatF}
m''\sum_{i=1}^n (\mu_i-\lambda_i) \leq \hat{F}(\mu)-\hat{F}(\lambda).
\end{equation}

Let us then prove this inequality.
By definition, we have $\lambda, \mu \in \veccone_R$.
Assume first that
\begin{equation}\label{easy case}
(\mu_1,\ldots,\mu_{n-1},\lambda_n) \in \veccone_R, \quad (\mu_1,\ldots,\mu_{n-2},\lambda_{n-1},\lambda_n) \in \veccone_R, \quad \ldots \quad (\mu_1,\lambda_2,\ldots,\lambda_n) \in \veccone_R.
\end{equation}
Then, taking $t(\mu_1,\ldots,\mu_{n-1},\mu_n)+(1-t)(\mu_1,\ldots,\mu_{n-1},\lambda_n) \in \veccone_R$ in \eqref{UE hatF} and integrating over $t \in [0,1]$, we have
$$
m(\mu_n-\lambda_n) \leq \hat{F}(\mu_1,\ldots,\mu_{n-1},\mu_n)-\hat{F}(\mu_1,\ldots,\mu_{n-1},\lambda_n).
$$
Taking then $t(\mu_1,\ldots,\mu_{n-1},\lambda_n)+(1-t)(\mu_1,\ldots,\mu_{n-2},\lambda_{n-1},\lambda_n) \in \veccone_R$ in \eqref{UE hatF}, iterating this process and summing all the obtained inequalities, we eventually obtain \eqref{caract UE hatF} with $m''=m$.
The general case can be deduced from the case \eqref{easy case} by an approximation argument (using the compactness of $\clos{\veccone_R}$).

\end{enumerate}

\end{proof}

\section{Additional properties for the $\MA_k$-equation}\label{sect Mk}

In this section, we detail the case of the $k$-Monge-Ampère equation, which is important for applications.

Let us first state explicitly the local regularity result that we have obtained for this equation.

\begin{theorem}\label{main thm Mk}
Let $k \in \ens{1,\ldots,n-1}$ ($n \geq 2$).
Let $f \in C^{\infty}(\Omega)$ with $f>0$ in $\Omega$.
Let $u \in W^{2,p}_{\loc}(\Omega)$ with $(\DDC u)(z) \in \matcone_k'=\lambda^{-1}(\veccone_k')$ for a.e. $z \in \Omega$ satisfy (almost everywhere)
$$\MA_k(\lambda(\DDC u))=f \quad \mbox{ in } \Omega.$$
If $p>n(C_n^k-1)$, then $u \in C^{\infty}(\Omega)$.
\end{theorem}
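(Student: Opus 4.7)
The plan is that Theorem \ref{main thm Mk} will follow directly from Corollary \ref{cor main thm} once we verify, via Proposition \ref{prop hatF}, that the $k$-Monge-Ampère operator fits the general framework of Theorem \ref{main thm}. For the choice $\hat{F}=\MA_k$ on $\veccone=\veccone_k'$, the degree of homogeneity is $\degr=C_n^k$, and since $k\in\ens{1,\ldots,n-1}$ with $n\geq 2$ we have $\degr\geq 2$, so the condition \eqref{cond p} specializes to $p>n\max\ens{C_n^k-1,1}=n(C_n^k-1)$, which is precisely our hypothesis. Thus the entire proof reduces to checking the structural assumptions listed in Proposition \ref{prop hatF}.

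Most of these assumptions were already recorded in Section \ref{sect examples} and are essentially immediate: $\MA_k$ is a symmetric $C^\infty$ polynomial, positive on $\veccone_k'$ and vanishing on $\partial\veccone_k'$ as a product of linear forms; $\veccone_k'$ is a nonempty open symmetric convex cone with $\veccone_n\subset\veccone_k'\subset\veccone_1$; and the concavity of $\MA_k^{1/C_n^k}$ follows from $\MA_k$ being hyperbolic with respect to $e=(1,\ldots,1)$ with hyperbolicity cone equal to $\veccone_k'$ (the roots of $t\mapsto\MA_k(\lambda-te)$ being exactly the real numbers $(\lambda_{i_1}+\ldots+\lambda_{i_k})/k$). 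The comparison inequality \eqref{hyp comp with MA hatF} will follow from the chain \eqref{ineq MAk}, whose detailed proof is postponed to Section \ref{sect ineq MAk}.

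The one remaining point that is not purely formal, and which I view as the main obstacle, is the hypothesis $\partial\veccone_k'\cap\partial\veccone_1=\ens{0}$, which guarantees boundedness of $\matcone_R$ and hence delivers property \ref{prop for UE}. I would prove it by exploiting the double-counting identity
\begin{equation*}
\sum_{(i_1,\ldots,i_k)\in E_n^k}\left(\lambda_{i_1}+\ldots+\lambda_{i_k}\right)=C_{n-1}^{k-1}\sum_{i=1}^n \lambda_i,
\end{equation*}
valid since each index $i$ appears in exactly $C_{n-1}^{k-1}$ subsets of size $k$. If $\lambda\in\partial\veccone_k'\cap\partial\veccone_1$, then each $k$-subset sum is nonnegative while their total vanishes, forcing every $k$-subset sum to be zero. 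Comparing two $k$-subsets that share $k-1$ indices (possible because $k\leq n-1$) then yields $\lambda_i=\lambda_j$ for any pair $i,j$; together with $\sum_i \lambda_i=0$, this forces $\lambda=0$.

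With all the hypotheses of Proposition \ref{prop hatF} verified, that proposition supplies the assumptions \ref{domain}, \ref{reg f}, \ref{pos f}, \ref{F hom}, \ref{hyp cvx}, \ref{hyp comp with MA} and \ref{prop for UE} needed by Corollary \ref{cor main thm}, and the conclusion $u\in C^\infty(\Omega)$ follows at once.
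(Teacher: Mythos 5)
Your proof is correct and follows essentially the same route as the paper: Theorem \ref{main thm Mk} is obtained by feeding $\hat{F}=\MA_k$, $\veccone=\veccone_k'$ into Proposition \ref{prop hatF} (concavity via hyperbolicity, comparison via \eqref{ineq MAk}) and then invoking Corollary \ref{cor main thm}, with $\degr=C_n^k\geq 2$ turning \eqref{cond p} into $p>n(C_n^k-1)$. Your double-counting verification that $\partial\veccone_k'\cap\partial\veccone_1=\ens{0}$ for $k\leq n-1$ is a valid and welcome elaboration of a point the paper dismisses as clear.
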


We recall once again that, unless $k=1$ (the Monge-Ampère equation), it is not known if this condition on $p$ is sharp.
However, we will provide in Section \ref{sect lower bound} below an example that shows that a threshold does exist for such a result to be valid.

\subsection{$k$-plurisubharmonic functions}

The importance in the study of the operator $\MA_k$ lies in its connection with the notion of $k$-plurisubharmonic function.

\begin{definition}
A function $u:\Omega \longrightarrow \R$ is said to be $k$-plurisubharmonic in $\Omega$ if it is upper semi-continuous and if it is subharmonic whenever it is restricted to any affine complex plane of dimension $k$.
We denote by $k$-plurisubharmonic the vector space of such functions.
\end{definition}

For instance, $1$-plurisubharmonic functions are the plurisubharmonic functions, and $n$-pluri-subharmonic functions are the subharmonic functions.

To make clearer the link between $k$-plurisubharmonic functions and the operator $\MA_k$, we first introduce some notations.

\begin{definition}
Let $A \in \C^{n \times n}$ and $k \in \ens{1,\ldots,n}$.
The operator $D_A: \Lambda^k \C^n \longrightarrow \Lambda^k \C^n $ is the linear action of $A$ as derivation on the space $\Lambda^k \C^n$ of $k$-vectors.
On simple $k$-vectors, this means that 
\begin{multline*}
D_A(v_1\wedge v_2 \wedge \cdots \wedge v_k)=
(Av_1)\wedge v_2 \wedge \cdots\wedge v_k
+v_1\wedge (Av_2) \wedge \cdots \wedge v_k
+\cdots
\\
+v_1\wedge v_2 \wedge \cdots \wedge (Av_k),
\end{multline*}
for every $v_1,\ldots, v_k \in \C^n$.
\end{definition}

By choosing the canonical basis of the complex space $\Lambda^k \C^n$ (recall that $\dim \Lambda^k \C^n=C_n^k$), $D_A$ can be identified with a matrix of size $C_n^k \times C_n^k$, that will still be denoted by $D_A$.

\begin{example}
Let $A=\left(u_{i \bar{j}}\right)_{1 \leq i,j \leq n}$.
\begin{itemize}
\item
For $n=3$ and $k=2$, we have
$$
D_A=
\begin{pmatrix}
u_{1\bar{1}}+u_{2\bar{2}}& u_{2\bar{3}} & -u_{1\bar{3}}\\
u_{3\bar{2}} & u_{1\bar{1}}+u_{3\bar{3}} & u_{1\bar{2}}\\
-u_{3\bar{1}} & u_{2\bar{1}} & u_{2\bar{2}}+u_{3\bar{3}}
\end{pmatrix}.
$$

\item
For $n=4$ and $k=3$, we have
$$
D_A=
\begin{pmatrix}
u_{1\bar{1}}+u_{2\bar{2}}+u_{3\bar{3}}& u_{3\bar{4}} & u_{1\bar{4}}& -u_{2\bar{4}}\\

u_{4\bar{3}} & u_{1\bar{1}}+u_{2\bar{2}}+u_{4\bar{4}} &- u_{1\bar{3}}&u_{2\bar{3}} \\

u_{4\bar{1}} & -u_{3\bar{1}} & u_{2\bar{2}}+u_{3\bar{3}}+u_{4\bar{4}}& u_{2\bar{1}}\\

-u_{4\bar{2}} & u_{3\bar{2}} & u_{1\bar{2}}&  u_{3\bar{3}}+u_{4\bar{4}}+u_{1\bar{1}}
\end{pmatrix}.
$$

\item
For $n=4$ and $k=2$, we have
$$
D_A=
\begin{pmatrix}
u_{1\bar{1}}+u_{2\bar{2}}& u_{2\bar{3}} & u_{2\bar{4}}& -u_{1\bar{3}}& -u_{1\bar{4}}& 0\\

u_{3\bar{2}} & u_{1\bar{1}}+u_{3\bar{3}} & u_{3\bar{4}}& u_{1\bar{2}}&0 & -u_{1\bar{4}}\\

u_{4\bar{2}} & u_{4\bar{3}} & u_{1\bar{1}}+u_{4\bar{4}}& 0 & u_{1\bar{2}} & u_{1\bar{3}} \\

-u_{3\bar{1}} & u_{2\bar{1}} & 0& u_{2\bar{2}}+u_{3\bar{3}}&u_{3\bar{4}} & -u_{2\bar{4}}\\

-u_{4\bar{1}} & 0 & u_{2\bar{1}}& u_{4\bar{3}}&u_{2\bar{2}}+u_{4\bar{4}} & u_{2\bar{3}} \\

0 & -u_{4\bar{1}} & u_{3\bar{1}}& -u_{4\bar{2}}& u_{3\bar{2}}& u_{3\bar{3}}+u_{4\bar{4}}
\end{pmatrix}.
$$

\end{itemize}
\end{example}

We can check that $D_A$ is Hermitian if so is $A$ and that the spectrum of $D_A$ is exactly
$$\ens{\lambda_{i_1}(A)+\ldots+\lambda_{i_k}(A) \st (i_1,\ldots,i_k) \in E^k_n},$$
so that it becomes clear that
$$\MA_k(\lambda(A))=\det (D_A),$$
and
$$\matcone_k'=\ens{A \in \Herm^n \st D_A>0}.$$
This cone is linked to the notion of $k$-plurisubharmonic function as follows: when $u \in C^2(\Omega)$, we have the characterization
$$u \text{ is $k$-plurisubharmonic in } \Omega \quad \Longleftrightarrow \quad (\DDC u)(z) \in \clos{\matcone_k'}, \quad \forall z \in \Omega.$$

For more information about $k$-plurisubharmonic functions we refer for instance the reader to \cite{HL11,HL13}.

\subsection{Comparison between $\MA_k$ and $\MA_{k-1}$}\label{sect ineq MAk}

In this section, we show that the crucial comparison property \eqref{hyp comp with MA hatF} holds for $\MA_k$.
In fact, we prove the more precise property \eqref{ineq MAk} that allows to compare $\MA_k$ and $\MA_{k-1}$.

Let $k \in \ens{2,\ldots,n}$ be fixed.
Let us first show the inclusion
$$\veccone_{k-1}' \subset \veccone_k'.$$
Let then $\lambda \in \veccone_{k-1}'$ and let us show that $\lambda_{i_1}+\ldots+\lambda_{i_k}>0$.
For any $(i_1,\ldots,i_k) \in E^k_n$, we denote by $I_k=\ens{i_1,\ldots,i_k}$ and
$$E^{k-1}_{I_k}=\ens{(j_1,\ldots,j_{k-1}) \st j_1,\ldots,j_{k-1} \in I_k, \quad j_1<\ldots<j_{k-1}}.$$
Then, the claim follows from the identity
\begin{equation}\label{estim cones}
0<\sum_{(j_1,\ldots,j_{k-1}) \in E^{k-1}_{I_k}} \left(\lambda_{j_1}+\ldots+\lambda_{j_{k-1}}\right)
=(k-1)\left(\lambda_{i_1}+\ldots+\lambda_{i_k}\right).
\end{equation}
To show this identity, we have to count how many $\lambda_{i_1}, \lambda_{i_2},\ldots$ are in the sum of the left-hand side.
For $\lambda_{i_1}$ the only possibility is to have $j_1=i_1$.
Therefore, there are $C_{k-2}^{k-1}=k-1$ such terms.
For $\lambda_{i_2}$ there are two and only two possibilities, namely $j_1=i_2$ or $j_2=i_2$ (in this second case, necessarily $j_1=i_1$).
This gives $C_{k-3}^{k-1}+C_{k-3}^{k-2}$ terms, which is equal to $C_{k-2}^{k-1}$ by Pascal's rule.
Repeating this reasoning leads to \eqref{estim cones}.

Let us now show the inequality
$$\MA_k(\lambda)\leq \MA_{k-1}(\lambda),$$
for $\lambda \in \veccone_{k-1}'$.
This is equivalent to show that
$$
\prod_{(i_1,\ldots,i_k) \in E_n^k} \left(\lambda_{i_1}+\ldots+\lambda_{i_k}\right)
\geq
\left(\frac{k}{k-1}\right)^{C_n^k}
\left(
\prod_{(j_1,\ldots,j_{k-1}) \in E^{k-1}_{I_k}}
\left(\lambda_{j_1}+\ldots+\lambda_{j_{k-1}}\right)
\right)^{C_n^k /C_n^{k-1}}.
$$
Using \eqref{estim cones} and the inequality of arithmetic and geometric means (note that $\card E^{k-1}_{I_k}=k$), we have
$$
\begin{array}{rl}
\ds
\lambda_{i_1}+\ldots+\lambda_{i_k}
&\ds =\frac{k}{k-1}
\frac{\sum_{(j_1,\ldots,j_{k-1}) \in E^{k-1}_{I_k}}  \left(\lambda_{j_1}+\ldots+\lambda_{j_{k-1}}\right)}{k}
\\
&\ds \geq 
\frac{k}{k-1}
\left(
\prod_{(j_1,\ldots,j_{k-1}) \in E^{k-1}_{I_k}}
\left(\lambda_{j_1}+\ldots+\lambda_{j_{k-1}}\right)
\right)^{1/k}.
\end{array}
$$
To conclude, it remains to observe the identity
\begin{multline*}
\prod_{(i_1,\ldots,i_k) \in E^k_n}
\left(\prod_{(j_1,\ldots,j_{k-1}) \in E^{k-1}_{I_k}}
\left(\lambda_{j_1}+\ldots+\lambda_{j_{k-1}}\right)
\right)
\\
=
\left(\prod_{(j_1,\ldots,j_{k-1}) \in E^{k-1}_n}
\left(\lambda_{j_1}+\ldots+\lambda_{j_{k-1}}\right)
\right)^{kC_n^k /C_n^{k-1}},
\end{multline*}
which can be established as before by counting the terms $\lambda_{j_1}+\ldots+\lambda_{j_{k-1}}$ that appear in the left-hand side (note that $kC_n^k/C_n^{k-1}=n-k+1$).
\qed

\subsection{A lower bound for $p$}\label{sect lower bound}

Let us conclude Section \ref{sect Mk} by mentioning that, even if the condition $p>n(C_n^k-1)$ is not known to be sharp, it is however necessary for the conclusion of Theorem \ref{main thm Mk} to be true that $p$ is not too small.
More precisely, we will show that it cannot be smaller than
$$p^*=(n-k)C_n^k.$$
Note that $p^*$ is only equal to $n(C_n^k-1)$ for $k=1$ (and $k=n$ but we have excluded this value).
This follows from the following type of Pogorelov's example, which is built on the one of \cite{Blo99}.

\begin{proposition}
Let $1 \leq m<n$ and $\beta>0$.
The function
$$u(z',z'')=\left(1+\abs{z'}^2\right) \abs{z''}^{2\beta},$$
where $(z',z'') \in \C^m \times \C^{n-m}$, satisfies:
\begin{enumerate}[(i)]
\item\label{Pog ex calcul Hess}
$u \in C^{\infty}(\C^n \backslash N)$, where $N=\ens{z \in \C^n \st z''=0}$, and $(\DDC u)(z)>0$ for every $z \in \C^n \backslash N$.

\item\label{Pog ex w2p}
$u \in W^{2,p}_{\loc}(\C^n)$ ($p \geq 1$) if, and only if,
\begin{equation}\label{cond w2p}
\beta \geq 1 \quad \mbox{ or } \quad \left(\beta<1 \quad \mbox{ and } \quad p<\frac{n-m}{1-\beta}\right).
\end{equation}

\item
$u \in C_{\loc}^{1,\alpha}(\C^n)$ ($\alpha \in [0,1]$) if, and only if, $\alpha \leq 2\beta-1$.

\item\label{Pog ex calcul MAk}
For $\beta=1-C_m^k/C_n^k$ ($C_m^k=0$ if $m<k$), we have $\MA_k(\lambda(\DDC u)) \in C^{\infty}(\C^n)$ with $\MA_k(\lambda(\DDC u))>0$ in $\C^n$.

\end{enumerate}

\end{proposition}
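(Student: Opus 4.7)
The plan is to compute everything explicitly; the bulk of the work is in \ref{Pog ex calcul MAk}.

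Writing $u = f(z')g(z'')$ with $f = 1 + |z'|^2$ and $g = |z''|^{2\beta}$, the complex Hessian admits the block decomposition
\[
\DDC u = \begin{pmatrix} g I_m & \beta |z''|^{2\beta-2}\bar{z}'(z'')^T \\ \beta|z''|^{2\beta-2}\overline{z''}(z')^T & f\beta|z''|^{2\beta-2}I_{n-m} + f\beta(\beta-1)|z''|^{2\beta-4}\overline{z''}(z'')^T \end{pmatrix}
\]
with respect to $\C^n = \C^m \oplus \C^{n-m}$. This gives \ref{Pog ex calcul Hess} immediately: smoothness off $N$ is clear, and positive-definiteness follows from a Schur complement computation, which shows that the Schur complement of the upper-left block has eigenvalues $f\beta|z''|^{2\beta-2}$ (with multiplicity $n-m-1$) and $\beta^2|z''|^{2\beta-2}$, both strictly positive. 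For \ref{Pog ex w2p} and (iii), the worst asymptotics of $\DDC u$ and $\nabla u$ near $N$ are $O(|z''|^{2\beta-2})$ and $O(|z''|^{2\beta-1})$, so integrating in polar coordinates in $z'' \in \C^{n-m}$ gives precisely the criterion \eqref{cond w2p} and the Hölder bound $\alpha \leq 2\beta - 1$; agreement of distributional and classical second derivatives is a standard cutoff argument since $u \in C^0(\C^n)$ and $N$ has real codimension at least $2$.

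For \ref{Pog ex calcul MAk}, I would work in an orthonormal basis of $\C^n$ adapted to the splitting $\C^m \oplus \C^{n-m}$ and to the distinguished vectors $\bar{z}'$ and $\bar{z}''$. At a generic point (with $z', z'' \neq 0$) the matrix $\DDC u$ is block-diagonal with blocks $s I_{m-1}$, $b I_{n-m-1}$ (writing $s = |z''|^{2\beta}$, $b = f\beta|z''|^{2\beta-2}$), and a $2 \times 2$ block $M$ with trace $s + f\beta^2|z''|^{2\beta-2}$ and determinant $\beta^2|z''|^{4\beta-2}$. Using $\MA_k(\lambda(\DDC u)) = \det D_{\DDC u}$ and the tensor decomposition
\[
\Lambda^k \C^n = \bigoplus_{j_1+j_2+j_3 = k} \Lambda^{j_1}\C^{m-1} \otimes \Lambda^{j_2}\C^{n-m-1} \otimes \Lambda^{j_3}\C^2,
\]
the derivation $D_{\DDC u}$ decomposes accordingly; its determinant on a summand is $(j_1 s + j_2 b)^d$ for $j_3 = 0$, $((j_1 + 1)s + (j_2 + \beta)b)^d$ for $j_3 = 2$, and $[(j_1 s + j_2 b)^2 + (j_1 s + j_2 b)(s + \beta b) + \beta s b/f]^d$ for $j_3 = 1$, where $d = C_{m-1}^{j_1} C_{n-m-1}^{j_2}$. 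Taking the product over all summands and extracting the common powers of $|z''|$ yields
\[
\MA_k(\lambda(\DDC u)) = |z''|^{(2\beta-2)C_n^k}\,\mathcal{P}(|z''|^2, f),
\]
where $\mathcal{P}$ is an explicit polynomial in $|z''|^2$ and $f$ with strictly positive coefficients.

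To finish, I would compute the order of vanishing of $\mathcal{P}$ at $z'' = 0$. Inspection of each bracket shows that only the $j_2 = 0$ factors vanish: the $j_3 = 0$ factor (at $j_1 = k$) contributes $(|z''|^2)^{C_{m-1}^k}$, and the $j_3 = 1$ factor (at $j_1 = k-1$) contributes $(|z''|^2)^{C_{m-1}^{k-1}}$ with positive leading coefficient $\beta^2((k-1)f + 1)$. By Pascal's identity $C_{m-1}^k + C_{m-1}^{k-1} = C_m^k$, so $\mathcal{P}$ is divisible by $(|z''|^2)^{C_m^k}$ with a strictly positive polynomial quotient. Choosing $\beta = 1 - C_m^k/C_n^k$ makes $(2\beta - 2)C_n^k = -2C_m^k$, the two exponents cancel, and $\MA_k(\lambda(\DDC u))$ extends to a polynomial in $|z''|^2$ and $f$ that is smooth and strictly positive on all of $\C^n$. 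The main obstacle is precisely this last combinatorial accounting: tracking the exact contribution of the $j_3 = 1$ sector (not only the $j_3 = 0$ one) to the vanishing order, and confirming via Pascal's rule that the two contributions sum to exactly $C_m^k$.
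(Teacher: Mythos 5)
Your computations check out and the conclusion is correct, but the route differs from the paper's in two places worth noting. For the eigenvalue structure, the paper works with the full block matrix \eqref{H expression} and recovers the two ``non-obvious'' eigenvalues indirectly from their sum $S=\Trace(\DDC u)-(m-1)\abs{z''}^{2\beta}-(n-m-1)\beta f\abs{z''}^{2\beta-2}$ and product $P$ extracted from $\det(\DDC u)$; you instead pass to an orthonormal basis adapted to $\overline{z'},\overline{z''}$, which block-diagonalizes $\DDC u$ into $s\Id_{m-1}\oplus b\Id_{n-m-1}\oplus M$ with $\Trace M=S$ and $\det M=P$ --- the same data, obtained more transparently (your Schur-complement check for positivity in \ref{Pog ex calcul Hess} is also correct, though note those are not the eigenvalues of $\DDC u$ itself, only of the complement, which suffices for definiteness). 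For \ref{Pog ex calcul MAk}, the paper argues directly on the sums $\lambda_{i_1}+\cdots+\lambda_{i_k}$: since $m$ eigenvalues are (smooth positive function)${}\times\abs{z''}^{2\beta}$ and $n-m$ are (smooth positive function)${}\times\abs{z''}^{2\beta-2}$, exactly $C_m^k$ of the $C_n^k$ sums vanish to the faster order, giving the exponent $2\beta C_m^k+(2\beta-2)(C_n^k-C_m^k)$ in one line. Your route through $\MA_k(\lambda(A))=\det D_A$ and the decomposition of $\Lambda^k\C^n$ reaches the same exponent (your $(2\beta-2)C_n^k+2C_m^k$ agrees), at the cost of the extra bookkeeping in the $j_3=1$ sector, whose order-one vanishing with leading coefficient $\beta^2((k-1)f+1)$ you correctly isolate before applying Pascal's rule; what this buys is an explicitly polynomial expression for $\MA_k(\lambda(\DDC u))$ in $\abs{z''}^2$ and $f$, whereas the paper's $\psi$ a priori involves the square root in $\phi$. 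Two small points to tighten: in \ref{Pog ex w2p} the ``only if'' direction needs a matching lower bound (some entry of the lower-right block is bounded below by $c\abs{z''}^{2\beta-2}$ on a cone in $z''$, which the paper supplies by computing the $L^p$ norm of the worst term exactly), and the coefficients of your $\mathcal{P}$ are nonnegative rather than all strictly positive --- positivity of the quotient at $z''=0$ should be read off from the nonvanishing brackets and the two leading coefficients, as you in fact do.
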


For $\beta=1-1/C_n^k$, the largest $p$ that we can obtain from the condition \eqref{cond w2p} is when $m=k$ and it gives $p<(n-k)C_n^k$.
Consequently, we have constructed an example that satisfies
$$
\renewcommand\arraystretch{2}
\left.\begin{array}{c}
u \in W^{2,p}_{\loc}(\C^n), \quad \forall p \in \left[1,(n-k)C_n^k\right), \\
\DDC u>0 \mbox{ in } \C^n \backslash \ens{z \in \C^n \st z''=0}, \\
\MA_k(\lambda(\DDC u)) \in C^{\infty}(\C^n), \\
\MA_k(\lambda(\DDC u))>0 \mbox{ in } \C^n,
\end{array}\right\}
\quad \mbox{ but } \quad
u \not\in C_{\loc}^{1,\alpha}(\C^n), \quad \forall \alpha>1-\frac{2}{C_n^k}.
$$

\begin{remark}
The same function can be used to obtain lower bounds for various degenerate elliptic equations (Monge-Ampère, $k$-Hessian equation, etc.), although not always optimal.
\end{remark}

\begin{proof}[Proof of \ref{Pog ex calcul Hess}]
It is clear that $\DDC u$ is smooth off $N$.
Let $z \in \C^n \backslash N$.
A direct computation shows that we have the block decomposition
\begin{equation}\label{H expression}
\renewcommand\arraystretch{2}
(\DDC u)(z)
=
\left(\begin{array}{c|c}
\delta_{ij}\abs{z''}^{2\beta} & \beta \bar{z}_i z_j\abs{z''}^{2\beta-2} \\
\hline
\beta \bar{z}_i z_j \abs{z''}^{2\beta-2} & \beta\left(1+\abs{z'}^2\right)\left((\beta-1)\bar{z}_i z_j \abs{z''}^{2\beta-4}+\delta_{ij} \abs{z''}^{2\beta-2}\right)
\end{array}\right),
\end{equation}
where $\delta_{ij}$ denotes the Kronecker delta and $1 \leq i,j \leq m$ for the left upper block, $1 \leq i \leq m$ and $m+1 \leq j \leq n$ for the right upper block, etc.

Let us now find all the eigenvalues of $(\DDC u)(z)$.
From this expression we see that:
\begin{itemize}
\item
$\abs{z''}^{2\beta}$ is an eigenvalue of multiplicity at least $m-1$.
Indeed, since the right upper block defines a matrix of rank at most $1$, the rank of $\DDC u-\abs{z''}^{2\beta} \Id$ is thus at most $1+n-m$.
By the rank-nullity theorem, $\abs{z''}^{2\beta}$ is an eigenvalue of multiplicity at least $n-(1+n-m)=m-1$.

\item
Similarly, $\beta\left(1+\abs{z'}^2\right)\abs{z''}^{2\beta-2}$ is an eigenvalue of multiplicity at least $n-m-1$.
Besides, in case of equality with the first type of eigenvalue, it is then of multiplicity $n-2$.
\end{itemize}

Therefore, in every case we know $n-2$ eigenvalues and to find the two remaining ones, we are going to find their sum $S$ and product $P$.
The computation of $S$ is easy.
Indeed, the sum of the eigenvalues is
$$\Trace(\DDC u)=S+(m-1)\abs{z''}^{2\beta}+(n-m-1)\beta\left(1+\abs{z'}^2\right)\abs{z''}^{2\beta-2}.$$
On the other hand, from \eqref{H expression}, we have
$$
\Trace(\DDC u)
=m\abs{z''}^{2\beta}+\beta\left(1+\abs{z'}^2\right)(n-m+\beta-1)\abs{z''}^{2\beta-2}.
$$
Therefore,
$$S=\abs{z''}^{2\beta}+\beta^2 \left(1+\abs{z'}^2\right)\abs{z''}^{2\beta-2}.$$

In order to find  the product $P$, we will use the relation with the determinant of $\DDC u$.
As product of the eigenvalues, we have
\begin{equation}\label{det as p vp}
\begin{aligned}
\det(\DDC u) &=P\left(\abs{z''}^{2\beta}\right)^{m-1} \left(\beta\left(1+\abs{z'}^2\right)\abs{z''}^{2\beta-2}\right)^{n-m-1}
\\
&=P \beta^{n-m-1} \left(1+\abs{z'}^2\right)^{n-m-1}
\abs{z''}^{2\beta(n-2)-2(n-m-1)}.
\end{aligned}
\end{equation}
Let us now compute $\det(\DDC u)$ directly from \eqref{H expression}.
Doing the line substitutions
$$L_i \leftarrow L_i -\sum_{r=1}^m \beta \bar{z}_i z_r \abs{z''}^{-2} L_r,$$
for each $i \in \ens{m+1,\ldots,n}$, we obtain
$$
\renewcommand\arraystretch{2}
\det(\DDC u)
=
\det\left(\begin{array}{c|c}
\delta_{ij}\abs{z''}^{2\beta} & \beta \bar{z}_i z_j\abs{z''}^{2\beta-2} \\
\hline
0 & a_{ij}
\end{array}\right),
$$
where we introduced
$$a_{ij}=\beta\left(\beta-\left(1+\abs{z'}^2\right)\right)\bar{z}_i z_j \abs{z''}^{2\beta-4}
+\beta \delta_{ij}\left(1+\abs{z'}^2\right)\abs{z''}^{2\beta-2}.$$
Denoting by $A=(a_{ij})_{m+1 \leq i,j \leq n}$ we have
$$\det(\DDC u)=\left(\abs{z''}^{2\beta}\right)^m \det(A).$$
To compute $\det(A)$, we note that it is the sum of a rank one matrix with an invertible matrix, for which we have the formula
$$\det(J+v_1 v_2^{\tra})=\det(J)(1+v_2^{\tra} J^{-1}v_1),$$
for any invertible $J$ and vectors $v_1,v_2$.
Using this formula we obtain
$$\det(A)=
\left(\beta \left(1+\abs{z'}^2\right)\abs{z''}^{2\beta-2}\right)^{n-m}
\frac{\beta}{1+\abs{z'}^2}.$$
In summary,
$$
\det(\DDC u)=\beta^{n-m+1}
\left(1+\abs{z'}^2\right)^{n-m-1}
\abs{z''}^{2\beta n-2(n-m)}.
$$
Comparing with \eqref{det as p vp}, we finally obtain that
$$P=\beta^2 \abs{z''}^{4\beta-2}.$$

From the knowledge of the sum $S$ and product $P$, the two remaining eigenvalues are given by
$$
\frac{S+\sqrt{S^2-4P}}{2}
=\phi(z) \abs{z''}^{2\beta-2}
\quad \mbox{ and } \quad
\frac{2P}{S+\sqrt{S^2-4P}}
=\frac{\beta^2}{\phi(z)} \abs{z''}^{2\beta},
$$
where $\phi$ is the smooth positive function defined by
$$\phi(z)=\frac{\abs{z''}^2+\beta^2 \left(1+\abs{z'}^2\right)
+\sqrt{
\left(\abs{z''}^2+\beta^2 \left(1+\abs{z'}^2\right)\right)^2
-4\beta^2 \abs{z''}^2
}
}{2}.$$
Finally, note that all the eigenvalues are positive outside $N$.
\end{proof}

\begin{proof}[Proof of \ref{Pog ex w2p}]
We only investigate the integrability of the second order derivatives since we can check afterwards that the first order derivatives yield a less restrictive condition on the exponent of integrability $p$.
We recall that the only problem for the integrability of $u_{i\bar{j}}$ is near $N$, see the expression \eqref{H expression}.
From this expression, we also see that the ``worst term'' is then
$$\beta\left(1+\abs{z'}^2\right) (\beta-1)\bar{z}_i z_j\abs{z''}^{2\beta-4},$$
(unless $\beta=1$ but in such a case we clearly have $u_{i\bar{j}} \in C^0(\C^n)$ for every $i,j$).

Let us then consider the ball $B_{R}' \times B_{\epsilon}''$, with $R,\epsilon>0$, where $B_r'$ (resp. $B_r''$) denotes the open ball of $\C^m$ (resp. $\C^{n-m}$) with center $0$ and radius $r>0$.
We recall that $\mu_d$ denotes the Lebesgue measure in $\C^d$.
We have
\begin{multline*}
\norm{\left(1+\abs{z'}^2\right)\bar{z}_i z_j\abs{z''}^{2\beta-4}}^p_{L^p(B_R' \times B_{\epsilon}'')}
=\left(\int_{B_R'}\left(1+\abs{z'}^2\right)^p d\mu_m\right)
\\
\times \left(\int_{B_{\epsilon}''}(\abs{z_i} \abs{z_j})^p \abs{z''}^{(2\beta-4)p}d\mu_{n-m}\right).
\end{multline*}

The first integral in the right-hand side is clearly finite.
For the second one, using standard formula (see e.g. \cite[p. 23]{Kli91}) we have
$$\int_{B_{\epsilon}''}(\abs{z_i} \abs{z_j})^p \abs{z''}^{(2\beta-4)p}d\mu_{n-m}
=C(n-m,\epsilon,p)\int_0^{\epsilon} r^{2(n-m)-1+2p+(2\beta-4)p} dr.
$$
It is well-known that this integral is finite if, and only if, the power of $r$ is larger than $-1$, which gives the desired condition \eqref{cond w2p}.

\end{proof}

\begin{proof}[Proof of \ref{Pog ex calcul MAk}]
Let us now compute $\MA_k(\lambda(\DDC u))$.
We recall that
$$
\MA_k(\lambda)=
\prod_{(i_1,\ldots,i_k) \in E_n^k} \left(\lambda_{i_1}+\ldots+\lambda_{i_k}\right).
$$

From what precedes we know that there are $m$ eigenvalues of the form $\abs{z''}^{2\beta}$ and $n-m$ eigenvalues of the form $\abs{z''}^{2\beta-2}$ (by ``of the form'' we mean up to the multiplication by a smooth positive function).
Now, observe that:
\begin{itemize}
\item
The sum $\lambda_{i_1}+\ldots+\lambda_{i_k}$ is of the form $\abs{z''}^{2\beta}$ if so is each element of the sum.
There are $C_m^k$ possibilities for this situation to happen ($C_m^k=0$ if $m<k$).

\item
In all the other cases, the sum $\lambda_{i_1}+\ldots+\lambda_{i_k}$ is of the form $\abs{z''}^{2\beta-2}$.
Since there are $C_n^k=\card E_n^k$ products overall in $\MA_k$, this means that such sums appear $C_n^k-C_m^k$ times.
\end{itemize}
In summary, we have
$$\MA_k(\lambda(\DDC u))
=\psi(z) \abs{z''}^{C_m^k(2\beta)+(C_n^k-C_m^k)(2\beta-2)},$$
for some smooth positive function $\psi$.
Consequently, $\MA_k(\lambda(\DDC u)) \in C^{\infty}(\C^n)$ when the power of $\abs{z''}$ is exactly equal to zero, which gives the desired condition on $\beta$, namely
$$\beta=1-\frac{C_m^k}{C_n^k}.$$
For this value of $\beta$ we also have $\MA_k(\lambda(\DDC u))=\psi>0$ in $\C^n$.

\end{proof}

\section*{Acknowledgements}

Both authors were supported by the Polish National Science Centre Grant 2017/26/E/ST1/00955.
The authors would like to thank S\l awomir Dinew for numerous
discussions and fruitful suggestions.

\bibliographystyle{amsalpha}
\bibliography{biblio}

\end{document}